\documentclass[draft,journal,onecolumn,12pt]{IEEEtran}

\usepackage{showtags} 

\usepackage{graphicx,courier,amssymb,amsmath,times}
\usepackage{epsfig,amsmath}
\usepackage{bm,graphicx,amssymb,psfrag}
\usepackage{amsfonts}
\usepackage[latin1]{inputenc}

\usepackage{booktabs}
 \usepackage{amsthm}
 
\usepackage{algorithmic}
\usepackage{algorithm}

\algsetup{linenosize=\scriptsize}

\usepackage{cite}

\usepackage{hyperref}

\usepackage[usenames]{color}

\usepackage{currfile} 


\usepackage[nolist]{acronym}

\newcommand{\W}{\mathcal{TW}}

\def\nmin{n}
\def\nmax{N}

\def\reggamma{P}


\newcommand{\mb}[1] {{\mathbf{#1}}}

\newcommand{\td}[1] {\tilde{#1}}

\def\Nmin{n}
\def\Nmax{N}

\def\Hypergeometric1F1{${}_{1}F_{1}$}

\def\sgn{\text{sgn}}
\def\erf{\text{erf}}
\def\erfc{\text{erfc}}
\def\aappx{\alpha}
\def\kappx{{k}}
\def\tappx{\theta}


\def\Fj{\reggamma}

\def\x{x}

\def\betareg{{\mathcal{B}}}

\def\p{\mathsf{p}}
\def\n{\mathsf{n}}
\def\m{\mathsf{m}}

\newcommand{\mat}[1] {{{\mathbf{#1}}}}
\newcommand{\vect}[1] {{{\mathbf{#1}}}}

\newtheorem{theorem}{Theorem}
\newtheorem{lemma}{Lemma}


\setlength\parindent{0pt} 

\renewcommand*{\today}{Submitted 2015}

\begin{document}

\begin{acronym}
\scriptsize
\acro{AcR}{autocorrelation receiver}
\acro{ACF}{autocorrelation function}
\acro{ADC}{analog-to-digital converter}
\acro{AWGN}{additive white Gaussian noise}
\acro{BCH}{Bose Chaudhuri Hocquenghem}
\acro{BEP}{bit error probability}
\acro{BFC}{block fading channel}
\acro{BPAM}{binary pulse amplitude modulation}
\acro{BPPM}{binary pulse position modulation}
\acro{BPSK}{binary phase shift keying}
\acro{BPZF}{bandpass zonal filter}
\acro{CD}{cooperative diversity}
\acro{CDF}{cumulative distribution function}
\acro{CCDF}{complementary cumulative distribution function}
\acro{CDMA}{code division multiple access}
\acro{c.d.f.}{cumulative distribution function}
\acro{ch.f.}{characteristic function}
\acro{CIR}{channel impulse response}
\acro{CR}{cognitive radio}
\acro{CSI}{channel state information}
\acro{DAA}{detect and avoid}
\acro{DAB}{digital audio broadcasting}
\acro{DS}{direct sequence}
\acro{DS-SS}{direct-sequence spread-spectrum}
\acro{DTR}{differential transmitted-reference}
\acro{DVB-T}{digital video broadcasting\,--\,terrestrial}
\acro{DVB-H}{digital video broadcasting\,--\,handheld}
\acro{ECC}{European Community Commission}
\acro{ELP}{equivalent low-pass}
\acro{FCC}{Federal Communications Commission}
\acro{FEC}{forward error correction}
\acro{FFT}{fast Fourier transform}
\acro{FH}{frequency-hopping}
\acro{FH-SS}{frequency-hopping spread-spectrum}
\acro{GA}{Gaussian approximation}
\acro{GPS}{Global Positioning System}
\acro{HAP}{high altitude platform}
\acro{i.i.d.}{independent, identically distributed}
\acro{IFFT}{inverse fast Fourier transform}
\acro{IR}{impulse radio}
\acro{ISI}{intersymbol interference}
\acro{LEO}{low earth orbit}
\acro{LOS}{line-of-sight}
\acro{BSC}{binary symmetric channel}
\acro{MB}{multiband}
\acro{MC}{multicarrier}
\acro{MF}{matched filter}
\acro{m.g.f.}{moment generating function}
\acro{MI}{mutual information}
\acro{MIMO}{multiple-input multiple-output}
\acro{MISO}{multiple-input single-output}
\acro{MRC}{maximal ratio combiner}
\acro{MMSE}{minimum mean-square error}
\acro{M-QAM}{$M$-ary quadrature amplitude modulation}
\acro{M-PSK}{$M$-ary phase shift keying}
\acro{MUI}{multi-user interference}
\acro{NB}{narrowband}
\acro{NBI}{narrowband interference}
\acro{NLOS}{non-line-of-sight}
\acro{NTIA}{National Telecommunications and Information Administration}
\acro{OC}{optimum combining}
\acro{OFDM}{orthogonal frequency-division multiplexing}
\acro{p.d.f.}{probability density function}
\acro{PAM}{pulse amplitude modulation}
\acro{PAR}{peak-to-average ratio}
\acro{PDP}{power dispersion profile}
\acro{p.m.f.}{probability mass function}
\acro{PN}{pseudo-noise}
\acro{PPM}{pulse position modulation}
\acro{PRake}{Partial Rake}
\acro{PSD}{power spectral density}
\acro{PSK}{phase shift keying}
\acro{QAM}{quadrature amplitude modulation}
\acro{QPSK}{quadrature phase shift keying}
\acro{r.v.}{random variable}
\acro{R.V.}{random vector}
\acro{SEP}{symbol error probability}
\acro{SIMO}{single-input multiple-output}
\acro{SIR}{signal-to-interference ratio}
\acro{SISO}{single-input single-output}
\acro{SINR}{signal-to-interference plus noise ratio}
\acro{SNR}{signal-to-noise ratio}
\acro{SS}{spread spectrum}
\acro{TH}{time-hopping}
\acro{ToA}{time-of-arrival}
\acro{TR}{transmitted-reference}
\acro{UAV}{unmanned aerial vehicle}
\acro{UWB}{ultrawide band}
\acro{UWBcap}[UWB]{Ultrawide band}
\acro{WLAN}{wireless local area network}
\acro{WMAN}{wireless metropolitan area network}
\acro{WPAN}{wireless personal area network}
\acro{WSN}{wireless sensor network}
\acro{WSS}{wide-sense stationary}

\acro{SW}{sync word}
\acro{FS}{frame synchronization}
\acro{BSC}{binary symmetric channels}
\acro{LRT}{likelihood ratio test}
\acro{GLRT}{generalized likelihood ratio test}
\acro{LLRT}{log-likelihood ratio test}
\acro{$P_{EM}$}{probability of emulation, or false alarm}
\acro{$P_{MD}$}{probability of missed detection}
\acro{ROC}{receiver operating characteristic}
\acro{AUB}{asymptotic union bound}
\acro{RDL}{"random data limit"}
\acro{PSEP}{pairwise synchronization error probability}

\acro{SCM}{sample covariance matrix}

\acro{PCA}{principal component analysis}

\end{acronym}

\title{On the probability that all eigenvalues of Gaussian, Wishart, and double Wishart random matrices lie within an interval 
}
%
\author{
    Marco~Chiani,~\IEEEmembership{Fellow,~IEEE}
\\
\thanks{Marco Chiani
        is with 
        DEI,  University of Bologna, 
        V.le Risorgimento 2, 40136 Bologna, Italy
        (e-mail: marco.chiani@unibo.it). 
This work was partially supported by the Italian Ministry of Education, Universities and Research (MIUR) under the PRIN Research Project ``GRETA'', and in part by the H2020 European Project EuroCPS,  Grant 644090. 
}
}

\markboth{Accepted for publication on IEEE Trans. On Inf. Theory}{Prob. all eigenvalues of random matrices lie within an interval}

\date{\today}

\maketitle




\begin{abstract}
We derive the probability that all eigenvalues of a random matrix $\bf M$ lie within an arbitrary interval $[a,b]$, $\psi(a,b)\triangleq\Pr\{a\leq\lambda_{\min}({\bf M}), \lambda_{\max}({\bf M})\leq b\}$, when $\bf M$ is a real or complex finite dimensional Wishart, double Wishart, or Gaussian symmetric/Hermitian matrix. 
 We give efficient recursive formulas allowing the exact evaluation of $\psi(a,b)$ for Wishart matrices, even with large number of variates and degrees of freedom. We also prove that the probability that all eigenvalues are within the limiting spectral support (given by the Mar{\v{c}}enko-Pastur or the semicircle laws) 
tends for large dimensions to the universal values $0.6921$ and $0.9397$ for the real and complex cases, respectively. 
Applications include improved bounds for the probability that a Gaussian measurement matrix has a given restricted isometry constant in compressed sensing. 
\end{abstract}

\begin{IEEEkeywords}
Random Matrix Theory, Principal Component Analysis, Compressed Sensing, eigenvalues distribution, Tracy-Widom distribution, Wishart matrices, Gaussian Orthogonal Ensemble, MANOVA, Jacobi ensemble.
\end{IEEEkeywords}

\section{Introduction}
\label{sec:intro}
\IEEEPARstart{M}{any} mathematical models in information theory and physics are formulated by using matrices with random elements.  
In particular, the distribution of the eigenvalues of Gaussian, Wishart, and double Wishart random matrices plays a key role in multivariate analysis, including principal component analysis, analysis of large data sets, information theory, signal processing and mathematical physics \cite{And:03,Mui:B82,Meh:91,Wint:87,Ede:88,Tel:99,Joh:01,ChiWinZan:J03,CanTao:05,PenGar:09,CheMcK:12}. 

Most of the problems concern the distribution of the smallest and/or of the largest eigenvalue, or of a randomly picked (unordered) eigenvalue. 
For example, in compressed sensing the probability that a randomly generated measurement matrix has a given restricted isometry constant  is related to the probability $\Pr\left\{a \leq \lambda_{\min}({\bf M}), \lambda_{\max}({\bf M}) \leq b \right\}$, where $\lambda_{\min}({\bf M})$, $\lambda_{\max}({\bf M})$ denote the minimum and maximum eigenvalues of the matrix ${\bf M}={\bf X}^\dag {\bf X}$, and ${\bf X}$ is a submatrix of the measurement matrix \cite{CanTao:05,Don:06,Can:08}. 
To mention another example, several stability problems in physics, in complex networks and in complex ecosystems are related to the probability that all eigenvalues of a random symmetric matrix (for instance with Gaussian entries) are negative 
 \cite{May:72,AazEas:06,DeaMaj:08,MarMca:13}.  
This probability is also important in mathematics, as it is related to the expected number of minima in random polynomials \cite{DedMal:07}.

Owing to the difficulties in computing the exact marginal distributions of eigenvalues, asymptotic formulas for matrices with large dimensions are often used as approximations.  
One important example is the Wigner semicircular law, giving the asymptotic distribution of a randomly picked eigenvalue of a symmetric/Hermitian random matrix ${\bf M}$ having 
zero mean \ac{i.i.d.} entries above the diagonal. 
The law applies to a wide range of distributions for the entries \cite{BaiSil:06}, and in particular when ${\bf M}={\bf X} + {\bf X}^\dag$ and the elements of ${\bf X}$ are zero mean (real or complex) \ac{i.i.d.} Gaussian. In this situation, the symmetric matrix ${\bf M}={\bf X} + {\bf X}^\dag$ belongs to the Gaussian Orthogonal Ensemble (GOE) and to the Gaussian Unitary Ensemble (GUE), for the real and complex cases, respectively. 
Another key result is the Mar{\v{c}}enko-Pastur law, giving the asymptotic distribution for one randomly picked eigenvalue of the random matrix ${\bf M}={\bf X}^\dag {\bf X}$. This is  related to the sample covariance matrix, and therefore of primary importance in statistics and signal processing. The limiting Mar{\v{c}}enko-Pastur law applies to a wide class of distributions for the entries of ${\bf X}$, including the case when ${\bf X}$ has \ac{i.i.d.} Gaussian entries, and thus ${\bf M}={\bf X}^\dag {\bf X}$ is a white Wishart matrix  \cite{BaiSil:06}.

The limiting value and the limiting distribution of the extreme eigenvalues (largest or smallest) have also been studied intensively \cite{BaiSil:06,TraWid:09}. The Tracy-Widom laws give the asymptotic distribution of the extreme eigenvalues around the limiting values  \cite{TraWid:94,TraWid:96,Joha:00,Joh:01,Sos:02,Joh:09,TraWid:09,FelSod:10,Ma:12}. Large deviation methods are  used  in \cite{DeaMaj:06,DeaMaj:08,NadMaj:11}  to derive the asymptotic behavior of the  distribution of the  largest eigenvalue far from its mean value. 

For non-asymptotic analysis, where the random matrices have finite dimensions, deriving the distribution of eigenvalues is generally difficult, especially for the real Wishart and GOE.  Recently, the exact distribution of the largest eigenvalue, as well as  efficient recursive methods for its numerical computation, has been  found for real white Wishart, multivariate Beta (also known as double Wishart or MANOVA), and GOE matrices  \cite{Chi:J14,Chi:J16}. 
These matrices are also denominated, using the names of the associated weight polynomials, as Laguerre (Wishart), Jacobi (double Wishart), and Hermite (Gaussian) ensembles.

In  this paper we give new expressions and efficient recursive methods for the evaluation of the function $\psi(a,b)$, defined as the probability 
 that all eigenvalues of a random matrix $\bf M$ are within an arbitrary interval $[a,b]$, when $\bf M$ is a real finite dimensional white Wishart, double Wishart, or Gaussian symmetric matrix. 
For completeness we  provide also the results for complex Wishart (with arbitrary covariance), complex double Wishart, and GUE, by specializing \cite[Th. 7]{ChiZan:C08}. 
The marginal cumulative distribution of the smallest eigenvalue and of the largest eigenvalue can be seen as the particular cases $1-\psi(a,\infty)$ and $\psi(-\infty,b)$, respectively.

We then derive simple and accurate approximations to $\psi(a,b)$ based on the incomplete gamma function and valid for large matrices, and prove 
 that the probability that all eigenvalues are within the limiting spectral support (given by the Mar{\v{c}}enko-Pastur and the semicircle laws) tends for large dimensions to the universal values $0.6921$ and $0.9397$ for the real and complex cases, respectively.

Throughout the paper we indicate with $\Gamma(.)$ the gamma function, with $\gamma\left(a; x, y\right)=\int_{x}^{y} t^{a-1} e^{-t} dt$ the generalized incomplete gamma function, with  $\reggamma(a,x)=\frac{1}{\Gamma(a)}\gamma(a;0,x)$ the regularized lower incomplete gamma function, with $P(a;x,y)=\frac{1}{\Gamma(a)} \int_{x}^{y} t^{a-1} e^{-t} dt=\reggamma(a,y)-\reggamma(a,x)$ the generalized regularized incomplete gamma function, 
 with $\betareg\left(x,y;a,b\right)=\int_{x}^{y} t^{a-1} (1-t)^{b-1} dt$ the  incomplete beta function \cite[Ch. 6]{AbrSte:B70}, with $()^\dag$ transposition and complex conjugation, {and with $|\cdot|$ or $\det(\cdot)$ the determinant}. When possible we use capital letters for random variables, and bold for vectors and matrices. 
We say that a random variable $Z$ has a standard complex Gaussian distribution (denoted ${\mathcal{CN}}(0, 1)$) if $Z = Z_1 + i Z_2$, where $Z_1$ and $Z_2$ are i.i.d. real Gaussian ${\mathcal N}(0, 1/2)$. A complex random vector $\vect{X}$ is Gaussian circularly symmetric if its \ac{p.d.f.} has the form $f(\vect{x}) \propto \exp\left(-\vect{x}^\dag \mat{\Sigma}^{-1} \vect{x}\right)$, where $\mat{\Sigma}$ is the covariance matrix. Note that this implies that $\vect{X}$ is zero mean. When $\mat{\Sigma}=\mat{I}$ the entries of $\vect{X}$ are \ac{i.i.d.} ${\mathcal{CN}}(0, 1)$.
\section{
Real Wishart and Gaussian symmetric matrices}
\label{sec:exactreal}
%
%

\subsection{Real Wishart matrices}

Assume a Gaussian real $\nmin \times \nmax$ matrix ${\bf X}$ with \ac{i.i.d.} columns, each with zero mean and covariance ${\bf \Sigma}$, and $\nmax \geq \nmin$. The  real matrix ${\bf M =X X}^T$ is called Wishart, and its distribution indicated as ${\bf \mathcal{W}}_{\nmin}(\nmax, {\bf \Sigma})$. When ${\bf \Sigma} \propto {\bf I}$ the matrix is called white or uncorrelated Wishart. 

Denoting 
 ${\Gamma}_{m}(a)=\pi^{m (m-1)/4}  \prod_{i=1}^{m}\Gamma(a-(i-1)/2)$, the joint \ac{p.d.f.} of the (real) ordered eigenvalues $\lambda_1 \geq \lambda_2 \cdots \geq
\lambda_{\Nmin} \geq 0$ of the real Wishart matrix ${\bf M} \sim {\bf \mathcal{W}}_{\nmin}(\nmax, {\bf I})$ is \cite{Jam:64, And:03} 
\begin{equation}\label{eq:jpdfuncorrnodet}
f(x_{1}, \ldots, x_{\Nmin}) = K \,
     \prod_{i=1}^{\Nmin}e^{-x_{i}/2}x_{i}^{\alpha}
       \prod_{i<j}^{\Nmin}
    \left(x_{i}-x_{j}\right) 
\end{equation}
where {$x_1 \geq x_2 \geq \cdots \geq x_{\Nmin} \geq 0$,  $\alpha \triangleq (\Nmax-\Nmin-1)/2$}, and $K$ is a normalizing constant given by
\begin{equation*}
K = \frac{\pi^{\Nmin^2 /2}}
        {2^{\Nmin \Nmax /2} \Gamma_{\Nmin}(\Nmax/2)  \Gamma_{\Nmin}(\Nmin/2)} \, .
\label{eq:K}
\end{equation*}
%
%
\subsection{Real symmetric Gaussian matrices (GOE)}
The Gaussian Orthogonal Ensemble is constituted by the real $n \times n$ symmetric matrices whose entries are \ac{i.i.d.} Gaussian  ${\mathcal{N}}(0, 1/2)$ on the upper-triangle, and  \ac{i.i.d.} ${\mathcal{N}}(0, 1)$ on the diagonal \cite{TraWid:09}. 
The joint \ac{p.d.f.} of the eigenvalues for GOE is \cite{Meh:91,TraWid:09}
\begin{equation} \label{eq:jpdfuncorrGOE}
f(x_{1}, \ldots, x_{n}) = K_{GOE} 
     \prod_{i=1}^{n}e^{-x_{i}^2/2} \prod_{i<j}^{n}
    \left(x_{i}-x_{j}\right) 
\end{equation}
where {$x_1 \geq x_2 \cdots \geq x_n$} and the normalizing constant is $K_{GOE}=[2^{n/2} \prod_{\ell=1}^{n} \Gamma(\ell/2)]^{-1}$. Note that here the eigenvalues are distributed over all the reals.


\subsection{Real multivariate beta (double Wishart) matrices}

Let ${\bf X, Y} $ denote two independent real Gaussian $\p \times \m$ and $\p \times \n$ matrices with $\m, \n \geq \p$, each constituted by zero mean \ac{i.i.d.} columns with common covariance.
 Multivariate analysis of variance (MANOVA) is based on the statistic of the eigenvalues of ${\bf{(A+B)}}^{-1} \bf{B}$ (beta matrix), where ${\bf A =X X}^T$ and ${\bf B =Y Y}^T$ are independent Wishart matrices.  These eigenvalues are clearly related to the eigenvalues of ${\bf{A}}^{-1} \bf{B}$ (double Wishart or multivariate beta).
 
The joint distribution  of $s$ non-null eigenvalues of a multivariate real beta matrix in the null case can be written in the form \cite[page 112]{Mui:B82}, \cite[page 331]{And:03}, 
\begin{equation}\label{eq:jpdfuncorrnodetMB}
f(\x_{1}, \ldots, \x_{s}) = K_{MB} \,
     \prod_{i=1}^{s} \x_{i}^{m} (1-\x_{i})^n
       \prod_{i<j}^{s}
    \left(\x_{i}-\x_{j}\right) 
\end{equation}
where $1 \geq \x_{1} \cdots \geq \x_{s} \geq 0$, and $K_{MB}$ is a normalizing constant given by
\begin{equation*}
K_{MB} = \pi ^{s/2} \prod _{i=1}^s  \frac{ \Gamma \left(\frac{i+2 m+2 n+s+2}{2} \right)}{ \Gamma
   \left(\frac{i}{2}\right) \Gamma \left(\frac{i+2 m+1}{2} \right) \Gamma \left(\frac{i+2 n+1}{2} \right)}\, .
\label{eq:C}
\end{equation*}
With the notation introduced above, this is the distribution of the eigenvalues of ${\bf (A+B)^{-1}B}$ with parameters 
%
$s=\p, \quad m=(\n-\p-1)/2 , \quad  n=(\m-\p-1)/2 \,.$
%
The marginal distribution of the largest eigenvalue is of basic importance in testing hypotheses and constructing confidence regions in MANOVA according to the Roy's largest root criterion 
 \cite[page 333]{And:03}, \cite{Chi:J16}. 

\subsection{The function $\psi(a,b)$ for real Wishart matrices}
The following is a new theorem for real white Wishart matrices.

\begin{theorem}
\label{th:cdfwishartreal}
The probability that all non-zero eigenvalues of the real Wishart matrix ${\bf M} \sim {\bf \mathcal{W}}_{\nmin}(\nmax, {\bf I})$ are within the interval $[a,b] \subset [0,\infty)$ is
\begin{equation}
\label{eq:cdfwishartreal}
\psi(a,b)=
 K' \,   \sqrt{\left|{\bf A}(a,b)\right|}
\end{equation}
with the constant
$$
K' = K \, 2^{\alpha \nmin+\nmin (\nmin+1)/2} \prod_{\ell=1}^{\nmin} \Gamma\left(\alpha+\ell\right) \,.
$$
%
In \eqref{eq:cdfwishartreal}, when $\nmin$ is even the elements of the $\nmin \times \nmin$ skew-symmetric matrix ${\bf A}(a,b)$  are\footnote{Note that skew symmetry implies $a_{i,j}=-a_{j,i}$ and $a_{i,i}=0$.}
\begin{multline}
\label{eq:aij}
a_{i,j} =  
\left[\Fj\left(\alpha_j,{b}/{2}\right)+\Fj\left(\alpha_j,{a}/{2}\right)\right] P\left(\alpha_i;{a}/{2},{b}/{2}\right) 
- \frac{2}{\Gamma(\alpha_i)} \int_{a/2}^{b/2} x^{\alpha+i-1} e^{-x} \Fj(\alpha_j, x) dx 
\end{multline}
for $i,j=1,\ldots,\nmin$, where $\alpha_\ell=\alpha+\ell=(\Nmax-\Nmin-1)/2+\ell$.

When $\nmin$ is odd, the elements of the $(\nmin+1) \times (\nmin+1)$ skew-symmetric matrix ${\bf A}(a,b)$ are as in \eqref{eq:aij}, with the additional elements
\begin{eqnarray}  \label{eq:aijoddwishartreal}
  a_{i,\nmin+1}&=&P\left(\alpha_i;{a}/{2},{b}/{2}\right)   \qquad i=1, \ldots, \nmin  \nonumber \\
a_{\nmin+1,j}&=&-a_{j,\nmin+1} \qquad\qquad j=1, \ldots, \nmin \\   
a_{\nmin+1,\nmin+1}&=&0  \, . \nonumber
\end{eqnarray}
%

Moreover, the elements $a_{i,j}$ can be computed iteratively, without numerical integration or series expansion, starting from $a_{i,i}=0$ with the iteration
\begin{multline}\label{eq:aij+1}
a_{i,j+1} =  a_{i,j}+\frac{\Gamma(\alpha_i+\alpha_j)}{ \Gamma(\alpha_j+1) \Gamma(\alpha_i) 2^{\alpha_i+\alpha_j-1}}{P}(\alpha_i+\alpha_j;a,b)  \\
 - \frac{g(\alpha_j,a/2)+g(\alpha_j,b/2)}{\Gamma(\alpha_j+1)}{P}\left(\alpha_i;{a}/{2},{b}/{2}\right)
\end{multline}
for $j=i, \ldots, \nmin-1$, with 
 $g(a,x)=x^a e^{-x}$. 
\end{theorem}

\begin{proof}
We have to integrate the \ac{p.d.f.} in \eqref{eq:jpdfuncorrnodet}. First we observe that an identity related to Vandermonde matrices gives $\det\left[ \left\{y_i^{j-1} \right\} \right]=\prod_{i<j}(y_j-y_i)$. 
Then, for arbitrary constants $\gamma_\ell \neq 0$  we write
\begin{align}
\underset{{a \, \leq x_{1} < \cdots  < x_{\Nmin} \leq \, b}}{\idotsint} & f (x_{\Nmin}, \ldots, x_{1}) d{\bf x} = \nonumber \\
= K' & \underset{{a/2 \, \leq y_1 < \cdots < y_{\Nmin} \leq \, b/2}}{\idotsint}   \prod_{i=1}^{\Nmin} \gamma_i \, y_i^{\alpha}e^{-y_i}  \prod_{i<j}^{\Nmin} (y_j-y_i)  d{\bf y} \nonumber \\
     = K' & \underset{{a/2 \, \leq y_1 < \cdots < y_{\Nmin} \leq \, b/2}}{\idotsint}  \det\left[\left\{ \Phi_i( y_j)\right\}\right]  d{\bf y} \label{eq:th1proof}
\end{align}
with $\Phi_i(y)=\gamma_i \, y^{\alpha+i-1} e^{-y}$ and $K'= K 2^{\alpha \nmin +\nmin(\nmin+1)/2} \prod_{\ell=1}^{\Nmin} \gamma_\ell^{-1}$.
To evaluate this integral we recall that for a generic $m \times m$ matrix ${\bf \Phi}({\bf w})$ with elements $\left\{\Phi_i(w_j)\right\}$ {where the $\Phi_i(x), \, i=1, \ldots, m$ are generic functions,} the following identity holds \cite{Deb:55}  
\begin{equation}
\label{eq:debru}
\underset{{a \leq w_1 < \ldots < w_{m} \leq b} }{\idotsint}  \left|{\bf \Phi}({\bf w})\right|  d{\bf w} = \text{Pf}\left({\bf A}\right)
\end{equation}
where $\text{Pf}\left({\bf A}\right)$ is the Pfaffian, $(\text{Pf}\left({\bf A}\right))^2={\left|{\bf A}\right|}$, and the skew-symmetric matrix $\bf A$ is $m \times m$ for $m$ even, and $(m+1) \times (m+1)$ for $m$ odd, with 
\begin{equation}
\label{eq:aijdebru}
a_{i,j}=\int_a^b \int_a^b \sgn(y-x) \Phi_i(x) \Phi_j(y) dx dy \qquad i,j = 1, \ldots, m .
\end{equation}
For $m$ odd the additional elements are $a_{i,m+1}=-a_{m+1,i}=\int_a^b \Phi_i(x) dx$, $i=1, \ldots, m$, and $a_{m+1,m+1}=0$. 

We then note that, by writing $F_\ell(y)=\int_0^y \Phi_\ell(x) dx$ a primitive of $\Phi_\ell(x)$, we can rewrite \eqref{eq:aijdebru} as 
\begin{align}
\label{eq:aijMC}
a_{i,j}&=\int_a^b \left[ \int_x^b\Phi_i(x) \Phi_j(y) dy - \int_a^x\Phi_i(x) \Phi_j(y) dy \right] dx \nonumber \\
&=\int_a^b \Phi_i(x) \left[ F_j(b)+F_j(a)-2 F_j(x) \right] dx   \\
&=\left[ F_j(b)+F_j(a) \right] \left[ F_i(b)-F_i(a) \right] -2 \int_a^b \Phi_i(x) F_j(x) dx \, . \nonumber
\end{align}
Then, using \eqref{eq:debru} and \eqref{eq:aijMC} in \eqref{eq:th1proof} with $\Phi_i(x)=\gamma_i \, x^{\alpha+i-1} e^{-x}$ and $\gamma_i=1/\Gamma(\alpha+i)$, after some  manipulations we get \eqref{eq:cdfwishartreal} and \eqref{eq:aij}. 

Theorem \ref{th:cdfwishartreal} does not require numerical integration or infinite series. 
In fact, first we observe that  for {an integer $n$} we have  \cite[Ch. 6]{AbrSte:B70} %
\begin{equation}
\label{eq:reggammarecursive}
\reggamma(a+n,x)=\reggamma(a,x)- e^{-x} \sum_{k=0}^{n-1} \frac{x^{a+k}}{ \Gamma(a+k+1)}.
\end{equation}
Therefore, $\reggamma(a,x)$ and $P(a;x,y)$ can be written in closed form when $a$ is integer or half-integer, starting from  
$\reggamma(0,x)=1$ and $\reggamma(1/2,x)= \erf\sqrt{x}$. 
Moreover, using the relation $\reggamma(a+1,x)=\reggamma(a,x)- e^{-x} x^a/\Gamma(a+1)$  in \eqref{eq:aij} gives, after simple manipulations, the iteration  \eqref{eq:aij+1}.
\end{proof}

In summary, the probability that all eigenvalues are within the interval $[a,b]$ is simply obtained, without any numerical integral, by Algorithm~\ref{alg:wis}. 

\begin{algorithm}[!h]
{
\renewcommand{\algorithmicrequire}{\textbf{Input:}}
\renewcommand{\algorithmicensure}{\textbf{Output:}}
\caption{\quad $\psi(a,b)$ for real Wishart matrices}
\label{alg:wis} 
\begin{algorithmic}[0]
\REQUIRE $\nmin, \nmax, a, b$
\ENSURE $\psi(a,b)=\Pr\left\{a \leq \lambda_{\min}({\bf M}), \lambda_{\max}({\bf M}) \leq b \right\}$
 \STATE ${\bf A}={\bf 0}$
 \STATE $\alpha_\ell=(\nmax-\nmin-1)/2+\ell$
 \STATE $g(\alpha_\ell,x)=x^{\alpha_\ell} e^{-x}$
\FOR{$i = 1 \to \nmin-1$}
  	\FOR {$j = i \to \nmin-1$}
		\STATE 
		$$\displaystyle a_{i,j+1}=a_{i,j}+\frac{\Gamma(\alpha_i+\alpha_j) \, 2^{1-\alpha_i-\alpha_j}}{ \Gamma(\alpha_j+1) \Gamma(\alpha_i) } P(\alpha_i+\alpha_j;a,b) - \frac{g(\alpha_j,a/2)+g(\alpha_j,b/2)}{\Gamma(\alpha_j+1)} P\left(\alpha_i;{a}/{2},{b}/{2}\right) $$
  	\ENDFOR
  \ENDFOR
  \IF{$\nmin$ is odd}
     \STATE append to ${\bf A}$ one column according to \eqref{eq:aijoddwishartreal} and a zero row 
  \ENDIF
  \STATE ${\bf A}={\bf A}-{\bf A}^T$
   \RETURN $ K' \, \sqrt{|{\bf A}|}$
\end{algorithmic}
}
\end{algorithm}
For instance, implementing directly the algorithm in Mathematica on a personal computer we obtain the exact value $\psi(a,b)$ for $\nmin=\nmax=500$ in few seconds. 

\subsection{The function $\psi(a,b)$ for real symmetric Gaussian matrices}
The following is a new theorem for GOE matrices.

\def\nmin{n}
\def\Fj{F}

\begin{theorem}
\label{th:cdfGOE}
The probability that all eigenvalues of the real GOE matrix ${\bf M}$ are within the interval $[a,b] \subset \, (-\infty, \infty)$ is
\begin{align}
\label{eq:cdfGOE}
\psi(a,b) 
= K_{GOE}' \,   \sqrt{\left|{\bf A}(a,b)\right|}
\end{align}
with the constant
$$
K_{GOE}' = K_{GOE} \, 2^{\nmin (\nmin+1)/4} \prod_{\ell=1}^{\nmin} \Gamma\left(\ell/2\right) \, .
$$

In \eqref{eq:cdfGOE}, when $\nmin$ is even the elements of the $\nmin \times \nmin$ skew-symmetric matrix ${\bf A}(a,b)$  are
\begin{multline}
\label{eq:aijGOE}
a_{i,j}= 
\left[\Fj_j\left(\frac{b}{\sqrt{2}}\right)+\Fj_j\left(\frac{a}{\sqrt{2}}\right)\right] \Fj_i\left(\frac{a}{\sqrt{2}},\frac{b}{\sqrt{2}}\right) 
- \frac{2}{\Gamma(i/2)} \int_{a/\sqrt{2}}^{b/\sqrt{2}} x^{i-1} e^{-x^2} \Fj_j(x) dx 
\end{multline}
for $i,j=1,\ldots,\nmin$. 

When $\nmin$ is odd, the elements of the $(\nmin+1) \times (\nmin+1)$ skew-symmetric matrix ${\bf A}(a,b)$ are as in \eqref{eq:aijGOE}, with the additional elements
\begin{eqnarray}  \label{eq:aijodd}
  a_{i,\nmin+1}&=&\Fj_i\left(\frac{a}{\sqrt{2}},\frac{b}{\sqrt{2}}\right)   \qquad i=1, \ldots, \nmin  \nonumber \\
a_{\nmin+1,j}&=&-a_{j,\nmin+1} \qquad\qquad j=1, \ldots, \nmin \\  
a_{\nmin+1,\nmin+1}&=&0 \, \nonumber
\end{eqnarray}
%
%
%
where 
\begin{equation}
\label{eq:fjGOE}
\Fj_j(y)=\frac{1}{\Gamma\left(\frac{j}{2}\right)} \int_{0}^{y} x^{j-1} e^{-x^2} dx =\frac{\sgn^j(y)}{2} \reggamma\left(\frac{j}{2},y^2\right)
\end{equation}
and ${\Fj}_j(x,y) \triangleq \Fj_j(y)-\Fj_j(x)$. 

Moreover, the elements $a_{i,j}$ can be computed iteratively, without numerical integration or series expansion, starting from 
\begin{multline*}
a_{2,1}=\frac{1}{4}\left\{\sqrt{2}\left[\erfc(b) -\erfc(a)\right] \right. 
\left. +\left(e^{-a^2/2}+e^{-b^2/2}\right) \left[\erfc\left(\frac{a}{\sqrt{2}}\right)-\erfc\left(\frac{b}{\sqrt{2}}\right)\right] \right\}
\end{multline*}
and using the antisymmetry $a_{j,i}=-a_{i,j}$, together with the iteration
\begin{multline}\label{eq:aij+1GOE}
a_{i,j+2}=a_{i,j}+\frac{\Gamma\left(\frac{i+j}{2}\right) 2^{-(i+j)/2}}{\Gamma(i/2) \Gamma(j/2+1) } \Fj_{i+j}\left(a,b\right) 
- \frac{q(j,a/\sqrt{2})+q(j,b/\sqrt{2})}{2 \Gamma(j/2+1)}\Fj_i\left(\frac{a}{\sqrt{2}},\frac{b}{\sqrt{2}}\right) 
\end{multline}
where $q(j,x)=x^j e^{-x^2}$. 
\end{theorem}

\begin{proof}
We have to integrate the \ac{p.d.f.} in \eqref{eq:jpdfuncorrGOE}. For arbitrary constants $\gamma_i \neq 0$,  we have 
\begin{align*}
\underset{{a \, \leq x_{1} < \cdots < x_{\nmin} \leq \, b}}{\idotsint} & f(x_{\nmin}, \ldots, x_{1}) d{\bf x} = \nonumber \\ 
= K'_{GOE} &\underset{{a/\sqrt{2} \, \leq y_1 < \cdots < y_{\nmin} \leq \, b/\sqrt{2}}}{\idotsint}   \prod_{i=1}^{\nmin} \gamma_i \, e^{-y^2_i}  \prod_{i<j}^{\nmin} (y_j-y_i)  d{\bf y} \nonumber \\
 = K'_{GOE} &\underset{{a/\sqrt{2} \, \leq y_1 < \cdots < y_{\nmin} \leq \, b/\sqrt{2}}}{\idotsint}  \det\left[\left\{ \Phi_i( y_j)\right\}\right]  d{\bf y}  \label{eq:th1GOEproof}
\end{align*}
with $\Phi_i(y)=\gamma_i \, y^{i-1} e^{-y^2}$ and $K'_{GOE}= K_{GOE} 2^{\nmin(\nmin+1)/4} \prod_{i=1}^{\nmin} \gamma_i^{-1}$.
Then, using \eqref{eq:debru} 
  with $\Phi_i(x)=\gamma_i \, x^{i-1} e^{-x^2}$ and $\gamma_i=1/\Gamma(j/2)$, after some manipulations we get \eqref{eq:cdfGOE}. 

Theorem \ref{th:cdfGOE} is amenable to easy evaluation, without numerical integration or infinite series. 
In fact, first we observe that, substituting \eqref{eq:reggammarecursive} in \eqref{eq:fjGOE} we have 
$$
\Fj_{j+2}(y)=\Fj_j(y)-y^j e^{-y^2} \frac{1}{2 \,\Gamma(j/2+1)}.
$$
Using the relation  
$$
\int_0^\beta x^{n-1} e^{-2x^2} dx=2^{-1-n/2} \sgn(\beta)^{n} \,\Gamma\left(\frac{n}{2}\right) P\left(\frac{n}{2},2\beta^2\right)
$$ 
in \eqref{eq:aijGOE} gives, after some manipulations, the iteration  \eqref{eq:aij+1GOE}.
\end{proof}

To build iteratively the upper half of the skew-symmetric matrix ${\bf A}(a,b)$ we just need \eqref{eq:aij+1GOE} and the first two diagonals $a_{i,i}$ and $a_{i,i+1}$. The first diagonal is clearly identically zero due to skew-symmetry, giving $a_{i,i}=0$. The odd first diagonal $a_{i,i+1}$ can be obtained by a zig-zag iteration 
$$a_{1,2} \overset{(a)}{\longrightarrow}  a_{2,1} \overset{(b)}{\longrightarrow} a_{2,3} \overset{(a)}{\longrightarrow} a_{3,2} \overset{(b)}{\longrightarrow} a_{3,4} \overset{(a)}{\longrightarrow} a_{4,3} \cdots$$ 
where steps $(a)$ use skew-symmetry, and steps $(b)$ use \eqref{eq:aij+1GOE}. 
The element $a_{1,2}$ is directly obtained in closed form from \eqref{eq:aijGOE}.

In summary, the probability that all eigenvalues are within the interval $[a,b]$ is simply obtained, without any numerical integral, by Algorithm~\ref{alg:psiGOE}. 
\begin{algorithm}[!h]
{
\renewcommand{\algorithmicrequire}{\textbf{Input:}}
\renewcommand{\algorithmicensure}{\textbf{Output:}}
\caption{\quad $\psi(a,b)$ for real Gaussian matrices (GOE)}
\label{alg:psiGOE} 
\begin{algorithmic}[0]
\REQUIRE $\nmin, \nmax, a, b$
\ENSURE $\psi(a,b)=\Pr\left\{a \leq \lambda_{\min}({\bf M}), \lambda_{\max}({\bf M}) \leq b \right\}$
 \STATE ${\bf A}={\bf 0}$
\STATE $\displaystyle a_{1,2}=-\frac{1}{4}\left\{\sqrt{2}\left[\erfc(b) -\erfc(a)\right] \right. \displaystyle \left. +\left(e^{-a^2/2}+e^{-b^2/2}\right) \left[\erfc\left(\frac{a}{\sqrt{2}}\right)-\erfc\left(\frac{b}{\sqrt{2}}\right)\right] \right\}$ 
\FOR{$i = 1 \to \nmin-2$}
  	\FOR {$j = i \to \nmin-2$}
		\STATE derive $a_{i,j+2}$ from $a_{i,j}$ using \eqref{eq:aij+1GOE}
  	\ENDFOR
		\STATE $a_{i+1,i}=-a_{i,i+1}$
		\STATE derive $a_{i+1,i+2}$ from $a_{i+1,i}$ using \eqref{eq:aij+1GOE} 
		\STATE $a_{i+1,i}=0$
  \ENDFOR
  \IF{$\nmin$ is odd}
     \STATE append to ${\bf A}$ one column according to \eqref{eq:aijodd} and a zero row 
  \ENDIF
  \STATE ${\bf A}={\bf A}-{\bf A}^T$
   \RETURN $ K'_{GOE} \, \sqrt{|{\bf A}|}$
\end{algorithmic}
}
\end{algorithm}

The algorithm can be used to evaluate numerically or symbolically $\psi(a,b)$. Evaluating numerically $\psi(a,b)$ for an arbitrary interval $[a,b]$ requires few seconds for matrices of dimensions $n=500$. The exact expression of $\psi(a,b)$ can be also derived symbolically in closed form. 
Some examples for the probability that all eigenvalues are negative (or all positive, due to symmetry), obtained from Algorithm ~\ref{alg:psiGOE}, are:
\begin{align*}
 n=1 \qquad \psi(-\infty,0)&=\frac{1}{2} \\
 n=2 \qquad \psi(-\infty,0)&=\frac{1}{4} \left(2-\sqrt{2}\right)  \\
 n=3 \qquad \psi(-\infty,0)&=\frac{\pi -2 \sqrt{2}}{4 \pi }  \\
 n=4 \qquad \psi(-\infty,0)&=\frac{\sqrt{\frac{1}{2} \left(9-4 \sqrt{2}\right)} \left(-16-4 \sqrt{2}+7 \pi \right)}{56 \pi }  \\
 n=5 \qquad \psi(-\infty,0)&=\frac{-8-\sqrt{2}+3 \pi }{24 \pi } \\
 n=10 \qquad \psi(-\infty,0)&=\displaystyle \frac{\sqrt{\frac{1}{2} \left(44217-27392 \sqrt{2}\right)}}{183377510400 \pi ^2}  \\ 
  \cdot \big[ \displaystyle 432799744+ & 6251520 \sqrt{2}  \\
  \displaystyle -\big(278413220+ & 1989925 \sqrt{2}\big) \pi +44769900 \pi  ^2\big] \,. \\
\end{align*}
The expressions for $n=1, 2$ and $3$ were already known as reported in \cite{DeaMaj:06,DeaMaj:08}. 
%
%

In \cite{DeaMaj:06,DeaMaj:08} the following asymptotic bound is also derived%
\begin{equation}
\label{eq:inzeroappxdean}
\psi(-\infty,0) \approx e^{-\nmin^2 \ln(3) /4} \, .
\end{equation}
Higher order corrections for large $\nmin$ have been provided in \cite[eq. (19)]{NadMaj:11}, not reported here for space reason. %
Also, the function $\psi(a,b)$ was studied in \cite{DeaMaj:08} for large $n$ and Gaussian matrices, by using a Coulomb gas representation of the distribution of the eigenvalues and arbitrary $a$ and $b$ (see in particular \cite[Eqs. (79), (81) and (82)]{DeaMaj:08}).

By comparing the exact value of $ \psi(-\infty,0)$ with the approximation \eqref{eq:inzeroappxdean} above, we found that the error is exponential in $\nmin$, and well approximated as a factor $10^{-\nmin/6}$. 
We found therefore that an improved approximation for $\psi(-\infty,0)$ is 
\begin{equation}
\label{eq:inzeroappxmc}
\psi(-\infty,0) \approx e^{-\nmin^2 \ln(3) /4-n \ln(10) /6} \, .
\end{equation}
Some values are reported in Table \ref{tab:inzero}.
\begin{table}[tb]
\caption{Probability $\psi(-\infty,0)$ that all eigenvalues are negative, GOE.} 
\label{tab:inzero}
\begin{center}
\begin{tabular}{c l l l l}
\toprule
 $\nmin$ & exact  & approx.   & approx. & approx.  \\
 & (Alg.~\ref{alg:psiGOE}) & \eqref{eq:inzeroappxdean} &  \cite[eq. (19)]{NadMaj:11} & \eqref{eq:inzeroappxmc} \\
 \midrule
  2 & 0.146 & 0.333 & 0.322& 0.155  \\
  5 & 1.40E-4 & 1.04E-3 & 1.91E-3 & 1.53E-4  \\
  10 & 2.27E-14 & 1.18E-12 & 1.23E-12 & 2.54E-14  \\
  50 & 2.43E-307 & 6.30E-299 & 3.31E-304 & 2.92E-307  \\
  100 & 2.72E-1210 & 1.57E-1193 & 1.49E-1206 & 3.39E-1210  \\
  500 & 2.85E-29904 & 8.35E-29821 & 3.88E-29899 & 3.87E-29904  \\
\bottomrule
\end{tabular}
\end{center}
\end{table}

Similar considerations can be done for Wishart matrices, for which approximations for large deviation behavior of the largest eigenvalue are available \cite{VivMaj:07}.  
By combining the large $n$ results for the interval $[0,a]$ in \cite{VivMaj:07} with those for the interval $[b,\infty)$ in \cite{MajViv:12}, it is possible to obtain large $n$ expressions of $\psi(a,b)$ for Wishart matrices. 
In particular,   \cite[eq. (4)]{VivMaj:07} is an approximation for $\psi(0,\nmin)$ for Wishart matrices ${\bf M} \sim {\bf \mathcal{W}}_{\nmin}(\nmin, {\bf I})$. In Table~\ref{tab:inmid} we compare the exact results and the approximation for some values of $\nmin$. 
\begin{table}[tb]
\caption{Probability $\psi(0,\nmin)$, real Wishart, $\nmax=\nmin$.} 
\label{tab:inmid}
\begin{center}
\begin{tabular}{c l l }
\toprule
 $\nmin$ & exact  & approx.  \\
 & (Alg.~\ref{alg:wis})  & \cite[eq. (4)]{VivMaj:07}  \\
 \midrule
  2 & 0.315 & 0.491  \\
  5 & 3.71E-3 &  1.18E-2  \\
  10 & 1.90E-9 &  1.95E-8  \\
  50 & 1.70E-198 &  1.81E-193  \\
  100 & 10.2E-781 &  1.07E-771  \\
  500 & 7.33E-19325 &  6.22E-19275  \\
\bottomrule
\end{tabular}
\end{center}
\end{table}

\def\nmin{n_{\text{min}}}
\def\Fj{\reggamma}


\subsection{The function $\psi(a,b)$ for real multivariate Beta (double Wishart) matrices}
\def\nmin{s}
The following is a new theorem for multivariate real beta matrices in the null case.

\begin{theorem}
\label{th:cdfbeta}
The probability that all eigenvalues of a real multivariate beta matrix ${\bf M}$ are within the interval $[a,b] \subset [0,1]$ is
\begin{equation}
\label{eq:cdfbetareal}
\psi(a,b)=
 K_{MB}' \,   \sqrt{\left|{\bf A}(a,b)\right|}
\end{equation}
with the constant
\begin{equation*}
K_{MB}'=K_{MB} \prod_{\ell=1}^{\nmin} \frac{\Gamma\left(m+\ell\right)}{\Gamma\left(m+\ell+n+1\right)} \,.
\label{eq:K'MB}
\end{equation*}

{In \eqref{eq:cdfbetareal},} when $\nmin$ is even the elements of the $\nmin \times \nmin$ skew-symmetric matrix ${\bf A}(a,b)$  are
\begin{multline}
\label{eq:aijbeta}
a_{i,j}= 
k_i k_j  \left[\betareg(0,a;m+j,n+1)+ \betareg(0,b;m+j,n+1)\right] \\ 
\cdot \betareg(a,b; m+i,n+1) -2 k_i k_j  \int_{a}^{b} x^{m+i-1} (1-x)^{n}  
\betareg(0,x; m+j,n+1) dx 
\end{multline}
for $i,j=1,\ldots,\nmin$, where $k_\ell=\Gamma(m+n+\ell+1)/\Gamma(m+\ell)$.

When $\nmin$ is odd, the elements of the $(\nmin+1) \times (\nmin+1)$ skew-symmetric matrix ${\bf A}(a,b)$ are as in \eqref{eq:aijbeta}, with the additional elements
\begin{eqnarray}  \label{eq:aijoddbeta}
  a_{i,\nmin+1}&=&k_i \betareg(a,b; m+i,n+1)    \qquad i=1, \ldots, \nmin  \nonumber \\
a_{\nmin+1,j}&=&-a_{j,\nmin+1} \qquad\qquad\qquad \, \qquad j=1, \ldots, \nmin \nonumber\\   
a_{\nmin+1,\nmin+1}&=&0 
\end{eqnarray}
%

Moreover, the elements $a_{i,j}$ can be computed iteratively, without numerical integration or infinite series expansion, starting from $a_{i,i}=0$ with the iteration
\begin{multline*}
a_{i,j+1}=a_{i,j}-k_i\left[g_{j+1}(a)+g_{j+1}(b)\right] \betareg(a,b; m+i,n+1) \\ +\frac{2 k_i k_{j+1}}{m+n+j+1} \betareg(a,b; 2m+i+j,2n+2)
\end{multline*}
%
%
for $j=i, \ldots, \nmin-1$, with $g_\ell(x)= x^{m+\ell-1} (1-x)^{n+1} {k_{\ell}}/{(m+n+\ell)}$. 
\end{theorem}

\begin{proof}
Here have to integrate the \ac{p.d.f.} in \eqref{eq:jpdfuncorrnodetMB}. Similarly to the previous, the proof leading to \eqref{eq:aijbeta} uses \eqref{eq:debru}, \eqref{eq:aijdebru} and \eqref{eq:aijMC}, with $\phi_i(x)=k_i x^{m+i-1} (1-x)^n$ and $F_\ell(y)=\int_0^y \Phi_\ell(x) dx=k_\ell \betareg(0,x;m+\ell,n+1)$. 

For the iterative derivation of the elements $a_{i,j}$, we use  the property \cite{Chi:J16}
\begin{equation*}
\label{eq:betarecursive}
\betareg(0,x; a+1, b)=\frac{a}{a+b} \betareg(0,x; a, b)-  \frac{x^a (1-x)^{b}}{a+b}
\end{equation*}
which produces 
%
$F_{j+1}(x)=F_{j}(x)- g_{j+1}(x)$. 
\end{proof}
In summary, the probability that all eigenvalues are within the interval $[a,b]$ is given by Algorithm~\ref{alg:cdfbeta}. 

\begin{algorithm}[!h]
{
\renewcommand{\algorithmicrequire}{\textbf{Input:}}
\renewcommand{\algorithmicensure}{\textbf{Output:}}
\caption{\quad $\psi(a,b)$ for real multivariate beta matrices}
\label{alg:cdfbeta} 
\begin{algorithmic}[0]
\REQUIRE $\nmin, m, n, a, b$
\ENSURE $\psi(a,b)=\Pr\left\{a \leq \lambda_{\min}({\bf M}), \lambda_{\max}({\bf M}) \leq b \right\}$
 \STATE ${\bf A}={\bf 0}$
 \STATE $g_\ell(x)= x^{m+\ell-1} (1-x)^{n+1} {k_{\ell}}/{(m+n+\ell)}$
 \STATE $k_\ell=\Gamma(m+n+\ell+1)/\Gamma(m+\ell)$
 \FOR{$i = 1 \to \nmin-1$}
  	\FOR {$j = i \to \nmin-1$}
		\STATE 
		$\displaystyle a_{i,j+1}=a_{i,j}-k_i\left[g_{j+1}(a)+g_{j+1}(b)\right] \betareg(a,b; m+i,n+1)+\frac{2 k_i k_{j+1}}{m+n+j+1} \betareg(a,b; 2m+i+j,2n+2)$
  	\ENDFOR
  \ENDFOR
  \IF{$\nmin$ is odd}
     \STATE append to ${\bf A}$ one column according to \eqref{eq:aijoddbeta} and a zero row 
  \ENDIF
  \STATE ${\bf A}={\bf A}-{\bf A}^T$
   \RETURN $ K_{MB}' \, \sqrt{|{\bf A}|}$
\end{algorithmic}
}
\end{algorithm}
\noindent
Implementing directly the algorithm in Mathematica on a personal computer, we obtain for example the exact distribution of the largest eigenvalue in less than $0.1$ seconds for all tables in \cite{Pil:67}, \cite[Table B.4]{And:03} and \cite[Table 1]{Joh:09}. 


\def\nmin{n_{\text{min}}}


%
\section{Complex uncorrelated and correlated Wishart and Hermitian Gaussian matrices}
\label{sec:exactcomplex}
%
The analysis for complex random matrices is easier than for the real case, and in fact some important results are known since many years for complex multivariate Beta matrices and for  uncorrelated complex Wishart \cite{Kha:64}. 
A general methodology which can be applied also to correlated complex Wishart (i.e., with covariance matrix not proportional to the identity matrix) is given in \cite{ChiZan:C08} and here specialized to provide $\psi(a,b)$ in several situations.

\def\nmin{n}

\subsection{Complex Wishart matrices}
Assume a Gaussian complex $\nmin \times \nmax$ matrix ${\bf X}$ with \ac{i.i.d.} columns, each circularly symmetric with covariance ${\bf \Sigma=I}$, and $\nmax \geq \nmin$.  
%
%
Denoting 
 $\td{\Gamma}_{\Nmin}(m)=\pi^{\Nmin(\Nmin-1)/2} \prod_{i=1}^{\Nmin}(m-i)!$, the joint \ac{p.d.f.} of the (real) ordered eigenvalues $\lambda_1 \geq \lambda_2 \ldots \geq\lambda_{\Nmin} \geq 0$ of the complex Wishart matrix ${\bf M=X X}^H \sim {\bf {\mathcal{CW}}}_{\nmin}(\nmax, {\bf I})$ (identity covariance) is well known to be \cite{Ede:88,Joh:01,ChiWinZan:J03} 
\begin{equation}\label{eq:jpdfuncorrnodetcomplex}
f(x_{1}, \ldots, x_{\Nmin}) = K \,
     \prod_{i=1}^{\Nmin}e^{-x_{i}}x_{i}^{\nmax-\Nmin} \prod_{i<j}^{\Nmin} \left(x_{i}-x_{j}\right)^2 
\end{equation}
where {$x_1 \geq x_2 \geq \cdots \geq x_{\Nmin} \geq 0$ and $K$ is a normalizing constant given by
\begin{equation}
1/K =\prod_{i=1}^{\nmin} (\nmax-i)! (\nmin-i)! \,.
\label{eq:Kwishartcomplex}
\end{equation}

 Assume now a Gaussian complex $\nmin \times \nmax$ matrix ${\bf X}$ with \ac{i.i.d.} columns, each circularly symmetric with covariance ${\bf \Sigma}$, and $\nmax \geq \nmin$.  
The joint distribution of the ordered eigenvalues 
 of ${\bf M =X X}^H \sim {\bf \mathcal{CW}}_{\nmin}(\nmax, {\bf \Sigma})$ has firstly been found in \cite{ChiWinZan:J03} as follows. 
\begin{lemma} 
Let ${\bf M} \sim {\mathcal{CW}}_n(N, \mat{\Sigma})$ be a complex Wishart matrix, $N\geq n$. Denote $\sigma_1 > \sigma_2 >
\ldots > \sigma_{\nmin} > 0$ the ordered
eigenvalues of $\bf \Sigma$. Then, the joint p.d.f. of the ordered eigenvalues of  ${\bf M}$ is 
\begin{align}\label{eq:jpdfcorrsimple}
f(x_{1}, \ldots, x_{\nmin})
    &= K_{{\bf \Sigma}} 
    \left|\mb{E}\left({\bf x},\bm{\sigma}\right)\right|
        \cdot \prod_{i<j}^{\nmin} (x_i-x_j)
        \cdot \prod_{j=1}^{\nmin} x^{\nmax-\nmin}_j 
\end{align}
where $\mb{E}\left({\bf x},\bm{\sigma}\right)=\left\{e^{-x_i/\sigma_j}\right\}_{i,j=1}^{\nmin}$ and
\begin{equation}
\label{eq:ksigma}
1/K_{{\bf \Sigma}} = \prod_{i<j}^{\nmin} (\sigma_i-\sigma_j) \prod_{i=1}^{\nmin} \sigma_i^{\nmax-\nmin+1} (\nmax-i)!  \, .
\end{equation}
\end{lemma}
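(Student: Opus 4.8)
The plan is to follow the standard route for diagonalizing a correlated complex Wishart matrix: start from the matrix density of $\mathbf{M}=\mathbf{X}\mathbf{X}^H$, integrate out the eigenvectors via the complex (unitary) Weyl integration formula, and then reduce the resulting Harish-Chandra/Itzykson-Zuber integral over the unitary group to a determinant. Concretely, with $\mathbf{X}$ having i.i.d.\ circularly symmetric columns of covariance $\mathbf{\Sigma}$, the density of $\mathbf{M}$ is proportional to $|\mathbf{M}|^{\nmax-\nmin}\,\mathrm{etr}(-\mathbf{\Sigma}^{-1}\mathbf{M})$ on the cone of positive-definite Hermitian matrices. Writing $\mathbf{M}=\mathbf{U}\,\mathrm{diag}(x_1,\ldots,x_{\nmin})\,\mathbf{U}^H$, the Jacobian of this change of variables contributes the squared Vandermonde $\prod_{i<j}(x_i-x_j)^2$ together with the Haar measure $d\mathbf{U}$ on $\mathrm{U}(\nmin)/\mathrm{U}(1)^{\nmin}$, and the eigenvalue density becomes $|\mathbf{\Lambda}|^{\nmax-\nmin}\prod_{i<j}(x_i-x_j)^2$ times $\int_{\mathrm{U}(\nmin)} \mathrm{etr}(-\mathbf{\Sigma}^{-1}\mathbf{U}\mathbf{\Lambda}\mathbf{U}^H)\,d\mathbf{U}$.

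The key step is the Harish-Chandra-Itzykson-Zuber identity: for Hermitian $\mathbf{A}$ with eigenvalues $\{-1/\sigma_j\}$ and $\mathbf{B}$ with eigenvalues $\{x_i\}$,
\begin{equation*}
\int_{\mathrm{U}(\nmin)} \mathrm{etr}\!\left(\mathbf{A}\,\mathbf{U}\,\mathbf{B}\,\mathbf{U}^H\right) d\mathbf{U} \;=\; c_{\nmin}\,\frac{\det\!\left[\,e^{-x_i/\sigma_j}\,\right]_{i,j=1}^{\nmin}}{\prod_{i<j}(x_i-x_j)\prod_{i<j}\big(\tfrac{1}{\sigma_j}-\tfrac{1}{\sigma_i}\big)}
\end{equation*}
for an explicit constant $c_{\nmin}$ (a product of factorials). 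Substituting this in cancels one factor of the Vandermonde $\prod_{i<j}(x_i-x_j)$ against the squared Vandermonde from the Jacobian, leaving exactly the claimed structure: $\left|\mathbf{E}(\mathbf{x},\bm{\sigma})\right|\cdot\prod_{i<j}(x_i-x_j)\cdot\prod_j x_j^{\nmax-\nmin}$, with $\mathbf{E}(\mathbf{x},\bm{\sigma})=\{e^{-x_i/\sigma_j}\}$. The $\sigma$-Vandermonde in the denominator of HCIZ gets absorbed into the normalizing constant. It remains only to collect the constant; one either tracks all factors (the Haar normalization, $c_{\nmin}$, the $|\mathbf{\Sigma}|^{-\nmax}$ and $\pi$-powers from the Gaussian density and the Jacobian) to obtain \eqref{eq:ksigma} directly, or—more cleanly—pins down $1/K_{\mathbf{\Sigma}}$ a posteriori by imposing $\int f\,d\mathbf{x}=1$, which itself reduces to an Andr\'eief/Cauchy-Binet determinant evaluation of $\det\big[\int_0^\infty x^{\nmax-\nmin+j-1}e^{-x/\sigma_j}\,dx\big]=\det\big[\Gamma(\nmax-\nmin+j)\,\sigma_j^{\nmax-\nmin+j}\big]$ over a Vandermonde-weighted integral.

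The main obstacle is the bookkeeping of multiplicative constants: the Haar-measure normalization on the coset $\mathrm{U}(\nmin)/\mathrm{U}(1)^{\nmin}$, the precise constant $c_{\nmin}$ in HCIZ, and the Jacobian constant of the spectral decomposition are each easy to get wrong by a factor of a product of factorials or a power of $\pi$, so the cleanest path is to establish the functional form first (which is forced by the computation above) and then fix $K_{\mathbf{\Sigma}}$ by the normalization integral, reducing the constant to the closed form in \eqref{eq:ksigma}. As a sanity check, letting $\sigma_i\to 1$ for all $i$ collapses $\left|\mathbf{E}(\mathbf{x},\bm{\sigma})\right|/\prod_{i<j}(\sigma_i-\sigma_j)$ to a second Vandermonde in $\mathbf{x}$ via l'H\^opital, recovering the uncorrelated density \eqref{eq:jpdfuncorrnodet} with constant \eqref{eq:Kwishartcomplex}, which confirms the normalization.
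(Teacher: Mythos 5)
Your outline is correct and is essentially the derivation the paper defers to: the paper's "proof" is just a citation to \cite{ChiWinZan:J03}, where the result is obtained exactly by your route (Wishart matrix density, spectral decomposition with the squared-Vandermonde Jacobian, the Harish-Chandra--Itzykson--Zuber integral to produce $\det[e^{-x_i/\sigma_j}]$, and fixing the constant via the normalization/Andr\'eief determinant). The only nitpick is notational: in the normalization determinant the entries should be $\int_0^\infty x^{\nmax-\nmin+i-1}e^{-x/\sigma_j}\,dx=\Gamma(\nmax-\nmin+i)\,\sigma_j^{\nmax-\nmin+i}$ with distinct row index $i$ and column index $j$, not $j$ in both slots.
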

\begin{proof}
See \cite{ChiWinZan:J03}.
\end{proof}
The analysis in \cite{ChiWinZan:J03} has been extended to the case where ${\bf \Sigma}$ has eigenvalues of arbitrary multiplicity and to the marginal eigenvalues distribution in  \cite{ChiZan:C08,ChiWinShi:J10,ZanChiWin:J09}. 

In particular, when ${\bf \Sigma}$ is spiked with $\sigma_1>\sigma_2=\sigma_3=\sigma_4=\cdots=\sigma_n$, we have the following result.
\begin{lemma}\label{lemmaCWspiked}
Let ${\bf M} \sim {\mathcal{CW}}_n(N, \mat{\Sigma})$ be a complex Wishart matrix, $N\geq n$. Denote $\sigma_1 > \sigma_2 =
\ldots = \sigma_{\nmin} > 0$ the ordered
eigenvalues of $\bf \Sigma$ (spiked covariance matrix). Then, the joint p.d.f. of the ordered eigenvalues of  ${\bf M}$ is 
\begin{align}\label{eq:jpdfcorrspiked}
f(x_{1}, \ldots, x_{\nmin})
    &= K_{1} 
    \left|\mb{E}\left({\bf x},\bm{\sigma}\right)\right|
        \cdot \prod_{i<j}^{\nmin} (x_i-x_j)
        \cdot \prod_{j=1}^{\nmin} x^{\nmax-\nmin}_j 
\end{align}
where $\mb{E}\left({\bf x},\bm{\sigma}\right)$ has elements 
\begin{align*}
\label{eq:eijcomplexcorr}
e_{i,j}=\left\{
  \begin{array}{ll}
    \displaystyle e^{-x_i/\sigma_1}  & j=1\\
   \displaystyle x_i^{n-j} e^{-x_i/\sigma_2}  & j=2, \ldots,\nmin
  \end{array}
\right.
\end{align*}
and
\begin{equation*}
\label{eq:ksigmaspiked}
\frac{1}{K_{1}} =  \sigma_1^{\nmax-\nmin+1} \sigma_2^{(\nmax-1)(\nmin-1)} (\sigma_1-\sigma_2)^{\nmin-1} \, \prod_{i=1}^\nmin (\nmax-i)! \prod_{\ell=2}^{\nmin-2} \ell ! \,\,\,\, .
\end{equation*}
\end{lemma}
\begin{proof}
This is a particular case of \cite[Lemma 6]{ChiWinShi:J10}.
\end{proof}

Below we report $\psi(a,b)$ for complex Wishart matrices. 
\begin{theorem}
\label{th:cdfwishartcomplex}
For complex Wishart matrices ${\bf M} \sim {\bf \mathcal{CW}}_{n}(N, {\bf \Sigma})$, $\nmax \geq \nmin$, the probability that all eigenvalues are within $[a,b] \subset [0, \infty)$ is given below, depending on the covariance $ {\bf \Sigma}$. 
\begin{enumerate}
\item
For the uncorrelated complex Wishart matrix ${\bf M} \sim {\bf \mathcal{CW}}_{n}(N, {\bf I})$: 
\begin{equation*}
\label{eq:cdfwishartcomplex}
\psi(a,b) =K \left|{\bf A}(a,b)\right|
\end{equation*}
where the elements of the $\nmin \times \nmin$ matrix ${\bf A}(a,b)$ are
\begin{align*}
\label{eq:aijcomplex}
a_{i,j} &=  \int_{a}^{b} t^{\nmax+\nmin-i-j} e^{-t} dt 
=\gamma\left(\nmax+\nmin-i-j+1; a, b \right)  \,
\end{align*}
and $K$ is given in \eqref{eq:Kwishartcomplex}. %
%
%

\bigskip
\item For the correlated complex Wishart matrix ${\bf M} \sim {\bf \mathcal{CW}}_{n}(N, {\bf \Sigma})$ where ${\bf \Sigma}$ has distinct eigenvalues $\sigma_1 > \sigma_2 > \cdots > \sigma_{\nmin} $: 
\begin{equation*}
\label{eq:cdfwishartcomplexcorr}
\psi(a,b)=
K_{\mat{\Sigma}} \left|{\bf A}(a,b)\right|
\end{equation*}
where the elements of the $\nmin \times \nmin$ matrix ${\bf A}(a,b)$ are
\begin{align*}
\label{eq:aijcomplexcorr}
a_{i,j} &=  \int_{a}^{b} t^{\nmax-i} e^{-t/\sigma_j} dt 
=\sigma_j^{\nmax-i+1} \gamma\left(\nmax-i+1;\frac{a}{\sigma_j},\frac{b}{\sigma_j}\right)  \, 
\end{align*}
 and $K_{\mat{\Sigma}}$ is given in \eqref{eq:ksigma}.

\bigskip
\item For the correlated complex Wishart matrix ${\bf M} \sim {\bf \mathcal{CW}}_{n}(N, {\bf \Sigma})$ with a spiked covariance ${\bf \Sigma}$ having eigenvalues $\sigma_1>\sigma_2=\sigma_3=\sigma_4=\cdots=\sigma_n$: 
\begin{equation*}
\label{eq:cdfwishartcomplexcorrspik}
\psi(a,b)=
K_{1} \left|{\bf A}(a,b)\right|
\end{equation*}
where the elements of the $\nmin \times \nmin$ matrix ${\bf A}(a,b)$ are
%
%
%
\begin{align*}
a_{i,1}&= \int_{a}^{b} t^{\nmax-i} e^{-t/\sigma_1} dt 
=\sigma_1^{\nmax-i+1} \gamma\left(\nmax-i+1;\frac{a}{\sigma_1},\frac{b}{\sigma_1}\right)
\end{align*}
and, for $j=2, \ldots,\nmin$,
\begin{align*}
a_{i,j}&=  \int_{a}^{b} t^{\nmax+\nmin-i-j} e^{-t/\sigma_2} dt 
= \sigma_2^{\nmax+\nmin-i-j+1} \gamma\left(\nmax+\nmin-i-j+1;\frac{a}{\sigma_2},\frac{b}{\sigma_2}\right) \,.
\end{align*}
The constant $K_{1}$ is given in Lemma~\ref{lemmaCWspiked}. 
\end{enumerate}
\end{theorem}
\begin{proof}
This theorem can be obtained by specializing \cite[Theorem 7]{ChiZan:C08}. More precisely, we first rewrite the p.d.f.'s in \eqref{eq:jpdfuncorrnodetcomplex}, \eqref{eq:jpdfcorrsimple}, and \eqref{eq:jpdfcorrspiked} as product of determinants of two matrices, by expressing  $\prod_{i<j}(x_i-x_j)$ as the determinant of a Vandermonde matrix. Then, applying \cite[Theorem 7]{ChiZan:C08}, after some simplifications we get the theorem.
\end{proof}
Note that the uncorrelated case 1) in the previous theorem can be seen as an extension of \cite[eq. (6)]{Kha:64}.

We remark that approximations and asymptotics for spiked Wishart have also been studied in recent literature (see e.g. \cite{Joh:01,BaiSil:06,Nad:08}).
%

\subsection{Hermitian Gaussian matrices (GUE)}
The Gaussian Unitary Ensemble (GUE) is composed of complex Hermitian random matrices with \ac{i.i.d.} ${\mathcal{CN}}(0, 1/2)$  entries on the upper-triangle, and ${\mathcal{N}}(0, 1/2)$ on the main diagonal \cite{TraWid:09}.  

The following theorem applies to the GUE.
\begin{theorem}
\label{th:cdfGUE}
The probability that all eigenvalues of a $n \times n$ GUE matrix ${\bf M}$ are within the interval $[a,b] \subset (-\infty,\infty)$ is
\begin{equation}
\label{eq:cdfGUE}
\psi(a,b)=
K_{GUE} \left|{\bf A}(a,b)\right|
\end{equation}
where the elements of the $n \times n$ matrix ${\bf A}(a,b)$ are
\begin{align*}
\label{eq:aijGUE}
a_{i,j} =&  \int_{a}^{b} t^{i+j-2} e^{-t^2} dt  \\
= & \frac{1}{2} \Gamma\left(\frac{i+j-1}{2} \right) \Big[\reggamma\left(\frac{i+j-1}{2}, b^2 \right)\sgn(b)^{i+j-1}  
- \reggamma\left(\frac{i+j-1}{2}, a^2 \right)\sgn(a)^{i+j-1}\Big]  \nonumber
\end{align*}
and $K_{GUE}=2^{n(n-1)/2} (\pi^{n/2} \prod_{i=1}^{n} \Gamma[i])^{-1}$ is a normalizing constant.
\end{theorem}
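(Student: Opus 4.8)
The plan is to use the Andr\'eief (Gram) identity, the determinantal counterpart of the de~Bruijn--Pfaffian identity \eqref{eq:debru} that drives the GOE proof of Theorem~\ref{th:cdfGOE}. For the unitary ensemble the Vandermonde factor appears squared, so a plain determinant (not a Pfaffian) results; this is exactly why ${\bf A}(a,b)$ stays $n\times n$ with no bordering row and column, in contrast to the GOE case.

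First I would record the joint density of the eigenvalues. With the stated normalization the Hermitian Gaussian matrix has law $\propto\exp(-\operatorname{Tr}{\bf M}^2)$, so the unordered eigenvalues have exchangeable density $\tfrac1{n!}h({\bf x})$ with $h({\bf x})=C_n\prod_{i<j}^n(x_i-x_j)^2\prod_{i=1}^n e^{-x_i^2}$, the constant $C_n$ to be pinned down at the end; hence $\psi(a,b)=\frac1{n!}\int_{[a,b]^n}h({\bf x})\,d{\bf x}$.

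Next, setting $\Phi_i(x)=x^{i-1}e^{-x^2/2}$, the Vandermonde identity $\det\bigl[\{x_j^{i-1}\}\bigr]=\prod_{i<j}(x_j-x_i)$ gives $\prod_{i<j}(x_i-x_j)^2\prod_i e^{-x_i^2}=\det\bigl[\{\Phi_i(x_j)\}\bigr]^2$, and Andr\'eief's identity (the mechanism behind \cite[Corollary~2]{ChiWinZan:J03}) turns $\frac1{n!}\int_{[a,b]^n}\det\bigl[\{\Phi_i(x_j)\}\bigr]^2\,d{\bf x}$ into $\det\!\left[\int_a^b\Phi_i(x)\Phi_j(x)\,dx\right]=\det\!\left[\int_a^b x^{i+j-2}e^{-x^2}\,dx\right]$, which is precisely $|{\bf A}(a,b)|$ with $a_{i,j}$ as in the first line of \eqref{eq:aijGUE}. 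The closed form in the second line then follows from the substitution $u=t^2$ (on $t>0$), which sends $\int_0^\beta t^{i+j-2}e^{-t^2}\,dt$ to $\tfrac12\Gamma\!\left(\tfrac{i+j-1}{2}\right)\reggamma\!\left(\tfrac{i+j-1}{2},\beta^2\right)$, followed by $\int_a^b=\int_0^b-\int_0^a$; tracking the parity of $i+j$ when $a$ or $b$ is negative is what produces the sign powers $\sgn(b)^{i+j-1}$ and $\sgn(a)^{i+j-1}$.

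What remains is the constant. Taking $[a,b]=(-\infty,\infty)$ and using $\psi=1$ forces $C_n=1/\det\bigl[\{\mu_{i+j-2}\}\bigr]_{i,j=1}^n$ with $\mu_k=\int_{\mathbb R}x^k e^{-x^2}\,dx$, a Hankel determinant of the moments of the weight $e^{-x^2}$. It equals $\prod_{k=0}^{n-1}h_k$ with $h_k$ the squared norm of the degree-$k$ monic orthogonal polynomial $2^{-k}H_k$ for that weight, so $h_k=2^{-k}k!\sqrt\pi$ and $\det\bigl[\{\mu_{i+j-2}\}\bigr]=2^{-n(n-1)/2}\pi^{n/2}\prod_{i=1}^n\Gamma(i)$, giving $C_n=2^{n(n-1)/2}\bigl(\pi^{n/2}\prod_{i=1}^n\Gamma(i)\bigr)^{-1}=K_{GUE}$. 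I expect the only genuinely delicate points to be this last normalization bookkeeping (checking that the $n!$ from Andr\'eief cancels the $1/n!$ in the exchangeable density, and evaluating the Hankel determinant) together with the sign powers when $[a,b]$ straddles the origin; everything else is a direct application of the Vandermonde and Andr\'eief identities. Equivalently, as for Theorem~\ref{th:cdfwishartcomplex}, the result also follows by writing the GUE density as a product of two Vandermonde determinants and specializing \cite[Theorem~7]{ChiZan:C08}.
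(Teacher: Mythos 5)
Your proposal is correct and follows essentially the same route as the paper: the paper also writes the squared Vandermonde in \eqref{eq:jpdfuncorrGUE} as a product of two determinants and invokes \cite[Corollary 2]{ChiWinZan:J03} (an Andr\'eief-type integration identity) with $\Psi_i(x)=\Phi_i(x)=x^{i-1}$ and weight $\xi(x)=e^{-x^2}$, yielding exactly $a_{i,j}=\int_a^b x^{i+j-2}e^{-x^2}dx$. The only difference is that you re-derive the constant $K_{GUE}$ from the Hankel determinant of Gaussian moments, whereas the paper simply takes it as the known normalization of the GUE joint eigenvalue density from the cited literature; your computation of it is correct.
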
 
\begin{proof}
As for the complex white Wishart, this theorem for GUE is easily derived from known results. In fact, the joint distribution of the ordered eigenvalues can be written as \cite{TraWid:09}
\begin{equation} \label{eq:jpdfuncorrGUE}
f(x_1,\ldots, x_n) = K_{GUE}  \prod_{i<j}^{\Nmin} \left(x_{i}-x_{j}\right)^2 
     \prod_{i=1}^{n}e^{-x_{i}^2} \, .
\end{equation}
Then, by using \cite[Corollary 2]{ChiWinZan:J03} with $\Psi_i(x_j)=\Phi_i(x_j)=x_j^{i-1}, \xi(x)=e^{-x^2}$ we get the result. 
\end{proof}
%

\subsection{Complex multivariate beta (double Wishart) matrices}
When ${\bf X, Y} $ are two independent {\emph{complex}} Gaussian, the analogous of \eqref{eq:jpdfuncorrnodetMB} is the complex multivariate beta, where the joint distribution of the eigenvalues is \cite{Kha:64}
\begin{equation}\label{eq:jpdfuncorrnodetcomplexMB}
f(\x_{1}, \ldots, \x_{s}) = K_{MB} \, 
     \prod_{i=1}^{s} \x_{i}^{m} (1-\x_{i})^n
    \cdot    \prod_{i<j}^{s}
    \left(\x_{i}-\x_{j}\right)^2 
\end{equation}
with $1 > \x_{1} \geq \x_{2} \cdots \geq \x_{s} > 0$, and 
$$K_{MB}=\prod _{i=1}^s  \frac{ \Gamma \left({m+ n+s+i}\right)}{ \Gamma
   \left({i}\right) \Gamma \left({i+m} \right) \Gamma \left({i+n} \right)} \,.$$ 
%
Therefore, by applying \cite[Corollary 2]{ChiWinZan:J03} we have for a complex multivariate Beta matrix ${\bf M}$ 
\begin{equation}
\label{eq:psiMBcomplex}
\psi(a,b)=
 K_{MB} \,   \left|{\bf A}(a,b)\right| \,
\end{equation}
where the elements of the $s \times s$ matrix ${\bf A}(a,b)$  are
\begin{equation*}
\label{eq:aijcomplexMB}
a_{i,j}=
\betareg(a, b; m+i+j-1,n+1) 
\end{equation*}
for $i,j=1,\ldots,s$.

Note that \eqref{eq:psiMBcomplex} can be seen as an extension of \cite[eq. (3)]{Kha:64}.

    
\def\nmin{n}

\section{Asymptotics and approximations}
\label{sec:approx} 
In this section we study $\psi(a,b)$ for large white Wishart and Gaussian matrices. 
To this aim we make the following three observations.
\begin{enumerate}
\item 
The statistical dependence between the largest and the smallest eigenvalues passes through the intermediate $\nmin-2$ eigenvalues.
Consequently, in the limit for $\nmin \to \infty$ the largest eigenvalue and the smallest eigenvalue are independent. Thus, for large matrix sizes we have 
\begin{equation}
\label{eq:psiindep}
\Pr\left\{{a} \leq \lambda_{\min}({\bf M}), \lambda_{\max}({\bf M}) \leq {b}\right\} 
\approx \Pr\left\{{a} \leq \lambda_{\min}({\bf M})\right\}  \Pr\left\{\lambda_{\max}({\bf M}) \leq {b}\right\} 
\end{equation}
which is like to say $\psi(a,b)\approx\psi(a,\infty) \psi(-\infty,b)$.

\item The distribution of the smallest and largest eigenvalues of white Wishart and Gaussian matrices for small deviations from the mean tend to a properly scaled and shifted Tracy-Widom distribution \cite{TraWid:94,TraWid:96,Joha:00,Joh:01,Elk:03, TraWid:09, FelSod:10, Ma:12}.  
%
More precisely, focusing for example on white Wishart matrices ${\bf M} \sim {\bf \mathcal{W}}_{p}(m, {\bf I})$ or ${\bf M} \sim {\bf \mathcal{CW}}_{p}(m, {\bf I})$,
 when 
 $m, p \rightarrow \infty$  and $m/p \rightarrow \gamma  \in [0,\infty]$ 
\begin{equation}
\label{eq:l1pca}
\frac{\lambda_{\max}(\mat{M}) -  \mu_{mp} }{\sigma_{mp}} \overset{{\mathcal{D}}}{\longrightarrow} {\W_{\beta}}
\end{equation}
%
where $\W_{\beta}$ denotes the Tracy-Widom random variable of order $\beta$ whose \ac{CDF} will be indicated with $F_\beta(x)$, 
  and\footnote{For small sizes a better approximation is obtained by using slightly different values, like $m-1/2$ and $p-1/2$ for the real case, instead of $m, p$. However, since $\psi(a,b)$ can be computed exactly as shown in Section~\ref{sec:exactreal} and \ref{sec:exactcomplex}, asymptotic expressions are interesting only for large dimensions for which these corrections are irrelevant.} 
\begin{align}
\label{eq:musigma}
\mu_{mp}=\left( \sqrt{m}+\sqrt{p}\right)^2 \quad \sigma_{mp}=\sqrt{\mu_{mp}} \left( \frac{1}{\sqrt{p}}+\frac{1}{\sqrt{m}}\right)^{\frac{1}{3}} .
\end{align}
In the previous expressions $\beta=1$ and $\beta=2$ for real and complex matrices, respectively. 
  
Similarly, for the smallest eigenvalue we can use the results in \cite{FelSod:10} or \cite{Ma:12}. For example, in \cite{FelSod:10} it is shown that when $m, p \rightarrow \infty$  and $m/p \rightarrow \gamma  \in (1,\infty)$,
\begin{equation}
\label{eq:lppcafed}
-\frac{\lambda_{\min}(\mat{M}) - \mu^-_{mp}}{\sigma^-_{mp}} \overset{{\mathcal{D}}}{\longrightarrow} {\W_{\beta}}
\end{equation}
with
\begin{align}
\label{eq:musigmameno}
\mu^-_{mp}=\left( \sqrt{m}-\sqrt{p}\right)^2 \quad \sigma^-_{mp}=\sqrt{\mu^-_{mp}} \left( \frac{1}{\sqrt{p}}-\frac{1}{\sqrt{m}}\right)^{\frac{1}{3}} .
\end{align}
%
%
%
Note that the variance of the smallest eigenvalue is smaller than that of the largest eigenvalue. 

For the Gaussian ensemble we have\footnote{For GOE and GUE the smallest and largest eigenvalues have symmetrical distributions.}
 for $n \to \infty$ \cite{TraWid:94,TraWid:96,TraWid:09}
\begin{align*}
\label{eq:l1GOEGUE}
\frac{\lambda_{\max}(\mat{M})  -  \mu_{n} }{\sigma_{n}} &\overset{{\mathcal{D}}}{\longrightarrow} {\W_{\beta}} \\
-\frac{\lambda_{\min}(\mat{M})  - \mu^-_{n} }{\sigma^-_{n}} &\overset{{\mathcal{D}}}{\longrightarrow} {\W_{\beta}} 
\end{align*}
with 
\begin{equation}
\label{eq:musigmaG}
\mu_{n}=2\sigma_0 \sqrt{n} \quad \mu^-_{n}=-\mu_{n} \quad \sigma_{n}=\sigma^-_{n}=\sigma_0 (n)^{-1/6} \, .
\end{equation}

\item The Tracy-Widom distribution can be accurately approximated by a scaled and shifted gamma distribution
\begin{equation}
\label{eq:appxmc} 
\W_{\beta}  \simeq \Gamma(\kappx,\tappx)-\aappx
\end{equation}
where $\aappx$ is a constant, and $\Gamma(\kappx,\tappx)$ denotes a gamma \ac{r.v.} with shape parameter $\kappx$ and scale parameter $\tappx$  \cite{Chi:J14}. 
Thus the \ac{CDF} 
 of $\W_{\beta}$ is accurately approximated by an incomplete gamma function as:
\begin{equation}
\label{eq:Fbetaappx}
\Pr\left\{\W_\beta \leq x\right\}= F_{\beta}(x) \simeq 
  \reggamma\left(\kappx,\frac{\left(x+\aappx\right)^+}{\tappx}\right) 
\end{equation}
where $(x)^+=\max\{0,x\}$ denotes the positive part. 
The parameters $\kappx,\tappx, \aappx$ 
 are reported in Table~\ref{tab:par} \cite{Chi:J14}.
\begin{table}[tb]
\caption{Parameters for approximating $\W_{\beta}$ with $\Gamma[\kappx,\tappx]-\aappx$.} 
\label{tab:par}
\begin{center}
\begin{tabular}{c l l l} \toprule
 &  $\W_1$ &  $\W_2$ &  $\W_4$ \\
 \midrule 
$ \kappx $&  46.446 & 79.6595 & 146.021 \\ 
 $\tappx$ & 0.186054 & 0.101037& 0.0595445\\
$ \aappx$ & 9.84801 & 9.81961 & 11.0016 \\
\bottomrule
\end{tabular}
\end{center}
\end{table}
\end{enumerate}


Thus, putting the gamma approximation in \eqref{eq:l1pca}, \eqref{eq:lppcafed} we have for white Wishart, GOE and GUE matrices 
%
\begin{equation}
\Pr\left\{\lambda_{\max}({\bf M})<{b}\right\} \to F_\beta\left(\frac{b-\mu}{\sigma}\right) 
\simeq \reggamma\left(\kappx, \frac{\left(\aappx+(b-\mu)/\sigma \right)^{+}}{\tappx } \right)  \label{eq:lmaxTWMC}
\end{equation}
\begin{equation}
\Pr\left\{\lambda_{\min}({\bf M})>{a}\right\} \to F_\beta\left(-\frac{a-\mu^-}{\sigma^-}\right) 
\simeq \reggamma\left(\kappx, \frac{\left(\aappx-(a-\mu^-)/\sigma^- \right)^{+}}{\tappx } \right) \label{eq:lminTWMC} 
\end{equation}
where $\mu, \sigma, \mu^-, \sigma^-$ are given by \eqref{eq:musigma}, \eqref{eq:musigmameno}, \eqref{eq:musigmaG}, and the parameters $\kappx,\tappx, \aappx$ are given in Table~\ref{tab:par}. 
These can be used in \eqref{eq:psiindep} to give 
%
%
%
\begin{equation}
\label{eq:psiappx}
\psi(a,b) \simeq 
\reggamma\left(\kappx, \left(\frac{\aappx}{\tappx}+\frac{b-\mu}{\tappx\sigma} \right)^{+} \right) \reggamma\left(\kappx, \left(\frac{\aappx}{\tappx}-\frac{a-\mu^-}{\tappx\sigma^-} \right)^{+} \right) \,.
\end{equation}
%
 %
Finally, we observe that \eqref{eq:psiappx} can be used not only for white Wishart and Gaussian symmetric/Hermitian matrices, but also for a wider class of matrices, due to the universality of the Tracy-Widom laws for the smallest and largest eigenvalues of large random matrices \cite{Sos:02,Pec:09,FelSod:10}. 

\section{Probability the all eigenvalues are within the support of the limiting Mar{\v{c}}enko-Pastur and Wigner spectral distribution}
%
\def\nmax{m}
\def\nmin{p}
Under quite general conditions, for a matrix $\mat{M}=\mat{X}\mat{X}^H$ where $\mat{X}$ is $(\nmin \times \nmax)$ with \ac{i.i.d.} entries with zero mean and variance $\sigma^2=1$, the Mar{\v{c}}enko-Pastur law gives the asymptotic \ac{p.d.f.} of an unordered\footnote{This can be seen as the distribution of a randomly picked eigenvalue, or of the arithmetic mean of all eigenvalues. In physics literature, this is related to the fraction of eigenvalues below a given value (spectral distribution).} eigenvalue $\lambda=\lambda({\bf M})$ for large $\nmin, \nmax$ with a fixed ratio $\nmin/\nmax \leq 1$ as\footnote{If $\nmin/\nmax>1$ the matrix has $\nmin-\nmax$ zero eigenvalues, so the distribution has an additional point of mass $1-\nmax/\nmin$ in $0$.} 
\begin{align*}
f(\lambda)= \left\{
  \begin{array}{ll}
  \displaystyle\frac{1}{2\pi \nmin \lambda} {\sqrt{(\tilde{b} - \lambda)(\lambda - \tilde{a})}}{} &\tilde{a} \leq \lambda \leq \tilde{b} \\
 0 & otherwise
  \end{array}\right. 
\end{align*}
%
where $\tilde{a} = \left(\sqrt{\nmax} - \sqrt{\nmin}\right)^2$ and $\tilde{b} = \left(\sqrt{\nmax} + \sqrt{\nmin}\right)^2$, and $[\tilde{a}, \tilde{b}]$ is the support of the Mar{\v{c}}enko-Pastur law \cite{MarPas:67,BaiSil:06}. 

Also, for increasing $\nmin, \nmax$ it has been proved that the the minimum and maximum eigenvalues converge to the edges of the Mar{\v{c}}enko-Pastur law  $\lambda_{\min}({\bf M}) \to \tilde{a}$ and $\lambda_{\max}({\bf M}) \to \tilde{b}$ \cite{BaiSil:06}. 
Note that when the entries of $\mat{X}$ are Gaussian the matrix $\mat{M}$ is white Wishart.

Similarly, under quite general conditions, for a Wigner matrix $\mat{M}=\mat{X}+\mat{X}^H$ where $\mat{X}$ is $(\nmin \times \nmin)$ with \ac{i.i.d.} entries with zero mean and variance $\sigma^2=1/4$, the Wigner semicircle law gives the asymptotic \ac{p.d.f.} of an unordered eigenvalue $\lambda=\lambda({\bf M})$ for large $\nmin$ as 
\begin{align*}
\label{eq:semicicle}
f(\lambda)= \left\{
  \begin{array}{ll}
  \displaystyle\frac{1}{\pi \nmin} \sqrt{2\nmin - \lambda^2} & |\lambda| \leq \sqrt{2 \nmin}\\
 0 & otherwise
  \end{array}\right.
\end{align*}
where $[-\sqrt{2 \nmin}, \sqrt{2 \nmin}]$ is the support of the semicircle law. Again, for increasing $\nmin$ it has been proved that the minimum and maximum eigenvalues converge to the edges of the semicircle support  $\lambda_{\min}({\bf M}) \to -\sqrt{2 \nmin}$ and $\lambda_{\max}({\bf M}) \to \sqrt{2 \nmin}$ \cite{BaiSil:06}. 

So, we could be tempted to think that for increasing matrix sizes all eigenvalues are within the Mar{\v{c}}enko-Pastur or semicircle supports with probability tending to one. However, this is not the case, 
 as proved in the following theorem. 
\begin{theorem}
\label{th:edges}
For increasingly large matrices, the probability that all eigenvalues of Wishart and Gaussian Hermitian matrices are within the Mar{\v{c}}enko-Pastur and semicircle supports tends to $F^2_1(0) = 0.6921$ and  $F^2_2(0) = 0.9397$ for the real and complex cases, respectively. 

More precisely, 
 we have the following results. 
\begin{enumerate}
\item
Let ${\bf M} \sim {\bf \mathcal{W}}_{\nmin}(\nmax, {\bf I})$ be a real Wishart matrix with $\nmax>\nmin$.  When $\nmax, \nmin \rightarrow \infty$  and $\nmax/\nmin \rightarrow \gamma  \in (1,\infty)$, the probability that all eigenvalues are within the Mar{\v{c}}enko-Pastur support is   
\begin{equation*}
\psi\left((\sqrt{\nmax}-\sqrt{\nmin})^2, (\sqrt{\nmax}+\sqrt{\nmin})^2\right) \to F^2_1(0) = 0.6921 \, .
\end{equation*}


\item 
Let ${\bf M} \sim {\bf \mathcal{CW}}_{\nmin}(\nmax, {\bf I})$ be a complex Wishart matrix with $\nmax>\nmin$.  
When $\nmax, \nmin \rightarrow \infty$  and $\nmax/\nmin \rightarrow \gamma  \in (1,\infty)$, the probability that all eigenvalues are within the Mar{\v{c}}enko-Pastur support is
\begin{equation*}
\label{eq:psiMPcomplex}
\psi\left((\sqrt{\nmax}-\sqrt{\nmin})^2, (\sqrt{\nmax}+\sqrt{\nmin})^2\right)  \to F^2_2(0) = 0.9397 \, .
\end{equation*}

\item
Let ${\bf M} $ be a $(\nmin \times \nmin)$ real symmetric GOE matrix.  When $\nmin \rightarrow \infty$ the probability that all eigenvalues are within the semicircle support is   
\begin{equation*}
\label{eq:psicircGOE}
\psi\left(-\sqrt{2 \nmin}, \sqrt{2 \nmin}\right) \to F^2_1(0) = 0.6921 \, .
\end{equation*}


\item 
Let ${\bf M} $ be a $(\nmin \times \nmin)$ complex symmetric GUE matrix.  When $\nmin \rightarrow \infty$ the probability that all eigenvalues are within the semicircle support is   
\begin{equation*}
\label{eq:psicircGUE}
\psi\left(-\sqrt{2 \nmin}, \sqrt{2 \nmin}\right) \to F^2_2(0) = 0.9397  \, .
\end{equation*}

\end{enumerate}
\end{theorem}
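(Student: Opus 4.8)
\emph{Proof strategy.} The plan is to combine the three ingredients assembled in Section~\ref{sec:approx}: the asymptotic independence of the extreme eigenvalues, the Tracy--Widom limits for both of them, and the elementary observation that the endpoints appearing in the statement are \emph{exactly} the centering sequences of those limits. Concretely, I would introduce, for the scaling constants $\mu,\sigma,\mu^-,\sigma^-$ of the ensemble at hand,
\[
\xi_{\max}\teq\frac{\lambda_{\max}(\mat{M})-\mu}{\sigma},\qquad
\xi_{\min}\teq-\frac{\lambda_{\min}(\mat{M})-\mu^-}{\sigma^-},
\]
and rewrite $\psi(a,b)=\Pr\left(\lambda_{\min}(\mat{M})\ge a,\ \lambda_{\max}(\mat{M})\le b\right)$ as $\Pr\left(\xi_{\min}\le -(a-\mu^-)/\sigma^-,\ \xi_{\max}\le (b-\mu)/\sigma\right)$. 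The key point is that in each of the four cases the endpoints $a,b$ coincide with $\mu^-,\mu$, so both arguments on the right are identically zero and no approximation is incurred at the level of the interval.

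First I would check this matching of endpoints. For the Wishart cases~1) and~2), \eqref{eq:musigma}--\eqref{eq:musigmameno} give $\tilde a=(\sqrt{\nmax}-\sqrt{\nmin})^2=\mu^-_{\nmax\nmin}$ and $\tilde b=(\sqrt{\nmax}+\sqrt{\nmin})^2=\mu_{\nmax\nmin}$, so that $\psi(\tilde a,\tilde b)=\Pr(\xi_{\min}\le 0,\ \xi_{\max}\le 0)$; moreover, under the hypothesis $\nmax/\nmin\to\gamma\in(1,\infty)$ the smallest eigenvalue stays strictly inside $(0,\infty)$ and fluctuates on the Tracy--Widom (soft-edge) scale, so \eqref{eq:l1pca} gives $\xi_{\max}\overset{\mathcal{D}}{\to}\W_\beta$ and \eqref{eq:lppcafed} gives $\xi_{\min}\overset{\mathcal{D}}{\to}\W_\beta$, with $\beta=1$ in the real and $\beta=2$ in the complex case. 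For the Gaussian cases~3) and~4) I would use \eqref{eq:l1GOEGUE}--\eqref{eq:musigmaG} instead, whose centering sequences are exactly the semicircle edges $\pm\sqrt{2\nmin}$, so once again $\psi(-\sqrt{2\nmin},\sqrt{2\nmin})=\Pr(\xi_{\min}\le 0,\ \xi_{\max}\le 0)$ with both extreme eigenvalues converging to $\W_\beta$ after rescaling.

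Then I would pass to the limit in $\Pr(\xi_{\min}\le 0,\ \xi_{\max}\le 0)$. For this I would invoke the rigorous form of \eqref{eq:psiindep}, namely that the pair $(\xi_{\min},\xi_{\max})$ converges jointly in distribution to two \emph{independent} copies of $\W_\beta$; for the determinantal ensembles (complex Wishart, GUE) this is the decoupling of the two soft-edge (Airy) scaling limits of the correlation kernel at the opposite ends of the spectrum, and the analogous statement holds for the Pfaffian kernels of the real Wishart and GOE. Since $F_\beta$ is continuous, the boundary $\{\xi_{\min}=0\}\cup\{\xi_{\max}=0\}$ carries no mass in the limit, so $\Pr(\xi_{\min}\le 0,\ \xi_{\max}\le 0)\to F_\beta(0)F_\beta(0)=F^2_\beta(0)$. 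Finally I would read off the constants from a numerical evaluation of the Tracy--Widom distribution function at the origin (via its Fredholm-determinant / Painlev\'e~II representation, or, with less accuracy, from the gamma approximation \eqref{eq:Fbetaappx} and Table~\ref{tab:par}, which gives $F_\beta(0)\simeq\reggamma(\kappx,\aappx/\tappx)$): this yields $F_1(0)\simeq 0.8319$ and $F_2(0)\simeq 0.9694$, hence $F^2_1(0)\simeq 0.6921$ and $F^2_2(0)\simeq 0.9397$.

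The main obstacle is the asymptotic-independence step. The limits \eqref{eq:l1pca}, \eqref{eq:lppcafed} and \eqref{eq:l1GOEGUE} are stated only marginally, and the heuristic of Section~\ref{sec:approx} --- that the statistical link between $\lambda_{\min}$ and $\lambda_{\max}$ is mediated by the $\nmin-2$ bulk eigenvalues --- has to be upgraded to genuine joint weak convergence of $(\xi_{\min},\xi_{\max})$ to a product law; everything else then reduces to the continuity argument above. For the unitary-invariant ensembles this decoupling is already available in the literature, whereas for the real Wishart and GOE establishing it at the Pfaffian level is the most delicate part of the argument.
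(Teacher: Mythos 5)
Your proposal follows essentially the same route as the paper's proof: identify the Marchenko--Pastur and semicircle edges with the Tracy--Widom centering constants $\mu$ and $\mu^-$, invoke asymptotic independence of $\lambda_{\min}$ and $\lambda_{\max}$, and conclude $\psi \to F_\beta^2(0)$ with the stated numerical values. The only difference is that you are more explicit about the need to upgrade the heuristic ``dependence mediated by the bulk'' argument to genuine joint weak convergence of the rescaled extremes to a product law, a step the paper asserts without further justification.
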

\begin{proof}
Let us consider the Wishart case. 
Then, using \eqref{eq:psiindep} and observing that the Mar{\v{c}}enko-Pastur edges are the constants $\mu_{mp}$ and  $\mu^-_{mp}$ appearing in \eqref{eq:l1pca} and \eqref{eq:lppcafed}, 
 we get the results. For GOE/GUE the limiting distribution of the extreme eigenvalues is still a shifted (of an amount equal to the circular law edges) and scaled $\W_\beta$, so the same reasoning leads to the results. 
\end{proof}
%
This theorem is valid not only for matrices derived from Gaussian measurements, but for the much wider class of matrices for which \eqref{eq:l1pca} and \eqref{eq:lppcafed} apply \cite{Sos:02,Pec:09,FelSod:10}. 

Note that the gamma approximation \eqref{eq:Fbetaappx} for the Tracy-Widom law gives $F^2_1(0) \simeq \reggamma^2\left(\kappx, {\aappx}/{\tappx }\right) = 0.8312^2 = 0.691$ and $F^2_2(0) \simeq  \reggamma^2\left(\kappx, {\aappx}/{\tappx }\right) = 0.96945^2 = 0.9398$.

Exact values for $\psi(\tilde{a}, \tilde{b})$ for finite dimension real Wishart matrices, as obtained by Algorithm~\ref{alg:wis}, are reported in Table~\ref{tab:psiabtw1}, together with the asymptotic value.

Finally, since for Wishart matrices the smallest and largest eigenvalues have different variances, asymptotically we can leave the same probability at the left and right side by moving $t \, \sigma^-_{ms}$ on the left and $t \, \sigma_{ms}$ on the right of the limiting Mar{\v{c}}enko-Pastur support. In fact, we have
\begin{align*}
\Pr\left\{\lambda_{\max}(\mat{M}) > \mu_{ms}+t \, \sigma_{ms} \right\} & \to 1-F_\beta(t) \\ 
\Pr\left\{\lambda_{\min}(\mat{M}) < \mu^-_{ms}-t \, \sigma^-_{ms} \right\} & \to 1-F_\beta(t)  
\end{align*}
and thus
\begin{align*}
\psi(\mu^-_{ms}-t \, \sigma^-_{ms}, \mu_{ms}+t \, \sigma_{ms}) & \to (1-F_\beta(t))^2
\end{align*}
with, for instance, $F_1(0) \simeq 83\%, \, F_1(1)\simeq 95\%, \, F_1(2) \simeq 99\%, \, F_1(3) \simeq 99.8 \%$. 

\begin{table}[tb]
\caption{Probability $\psi(\tilde{a}, \tilde{b})$ that all eigenvalues of a real Wishart matrix are within the Mar{\v{c}}enko-Pastur edges for $\nmin/\nmax=2/3, 1/2, 1/5, 1/10$. Numerical values for finite $\nmin$ obtained by Algorithm~\ref{alg:wis}, and for $\nmin=\infty$ by Theorem~\ref{th:edges}.} 
\label{tab:psiabtw1}
\begin{center}
\begin{tabular}{c c c c c}
\toprule
\multicolumn{5}{c}{$\psi(\tilde{a}, \tilde{b})$} \\ 
\cline{2-5} 
 &  $\nmin/\nmax=2/3$ & $\nmin/\nmax=1/2$ & $\nmin/\nmax=1/5$ & $\nmin/\nmax=1/10$ \\
$\nmin$ & & & & \\
 \midrule
 10 &  0.7678 & 0.7645 & 0.7625 & 0.7624\\
 20 &  0.7499 & 0.7483 & 0.7476 & 0.7477\\
 50 &  0.7332 & 0.7326 & 0.7327 & 0.7329 \\
 100 &  0.7239 & 0.7238 & 0.7242 & 0.7244\\
 200 &  0.7169 & 0.7171 & 0.7175 & 0.7177\\
 500 &  0.7101 & 0.7103 & 0.7108 & 0.7109\\
 $\infty$ &  0.6921 & 0.6921 & 0.6921 & 0.6921 \\
 \bottomrule
\end{tabular}
\end{center}
\end{table}
%


\section{Application to compressed sensing}
Assume that we want to solve the system
\begin{equation}
\label{eq:cs}
\vect{y}=\mat{A}\vect{x}
\end{equation}
where $\vect{y} \in \mathbb{R}^m$ and $\mat{A} \in \mathbb{R}^{m \times n}$ are known, the number of equations is  $m <n$, and  $\vect{x} \in \mathbb{R}^n$ is the unknown. Since $m<n$, we can think of $\vect{y}$ as a compressed version of $\vect{x}$. Without other constraints the system is underdetermined, and there are infinitely many distinct solutions $\vect{x}$ satisfying \eqref{eq:cs}. If we assume that at most $s$ elements of $\vect{x}$ are non-zero (i.e., the vector is $s$-sparse), and $s < m$, then there is only one solution (the right one) to \eqref{eq:cs}, provided that all possible submatrices consisting of $2s$ columns of  $\mat{A}$ are maximum rank ($2s$). However, even when this condition is satisfied, finding the solution of \eqref{eq:cs} subject to $||\vect{x}||_0 \le s$, where the $\ell_0$-``norm'' $||\cdot ||_0$ is the number of non-zero elements, is computationally prohibitive. A computationally much easier problem is to find a $\ell_1$-norm minimization solution $\vect{x}=\left\{\arg\min_{\tilde{\vect{x}}} ||\tilde{\vect{x}}||_1 \, : \, \vect{y}=\mat{A}\tilde{\vect{x}} \right\}$. In \cite{CanTao:05} it is proved that, under some more strict conditions on $\mat{A}$, the solution provided by $\ell_1$-norm minimization is the same as that of the $\ell_0$-``norm'' minimization. More precisely, for integer $s$  define the isometry constant of a matrix $\mat{A}$ as the smallest number  $\delta_s=\delta_s(\mat{A})$  such that \cite{CanTao:05}
\begin{equation}
\label{eq:csric}
(1-\delta_s) ||\vect{x}||^2_2 \leq ||\vect{A x}||^2_2 \leq (1+ \delta_s) ||\vect{x}||^2_2
\end{equation}
holds for all $s$-sparse vectors $\vect{x}$. The possibility to use $\ell_1$ minimization instead of the impractical $\ell_0$ minimization is, for a given matrix $\mat{A}$, related to the restricted isometry constant \cite{CanTao:05}. For example, in \cite{CaiWanXu:10} it is shown that the $\ell_0$ and the $\ell_1$ solutions are coincident for $s$-sparse vectors $\vect{x}$ if $\delta_s < 0.307$. 

Then, the next question is how to design a matrix $\mat{A}$ with a prescribed isometry constant. One possible way to design $\mat{A}$ consists simply in randomly generating its entries according to some statistical distribution. 
 The target here is to find a way to generate $\mat{A}$ such that, for example, for given $m, n, s$, the probability $\Pr\left\{\delta_s(\mat{A}) < 0.307 \right\}$ is close to one. When the measurement matrix $\mat{A}$ has entries randomly generated according to a $\mathcal{N}(0,1/m)$ distribution, this probability can be bounded starting from the probability $\Pr\left\{a\leq \lambda_{\min}({\bf A}_s^T {\bf A}_s) , \lambda_{\max}({\bf A}_s^T {\bf A}_s)\leq b\right\}$, where 
 ${\bf A}_s$ is a $m \times s$ Gaussian random matrix with $\mathcal{N}(0,1/m)$ i.i.d. entries \cite[Sec. III]{CanTao:05}.  
In  \cite{CanTao:05}, deviation bounds for the largest and smallest eigenvalues of ${\bf A}_s^T {\bf A}_s$ are obtained, using the concentration inequality, as
\begin{align*}
\Pr\left\{\sqrt{\lambda_{\max}({\bf A}_s^T {\bf A}_s)} > 1+\sqrt{\frac{s}{m}}+ o(1)+t \right\} & \leq  e^{-m t^2 /2}  \\
\Pr\left\{\sqrt{\lambda_{\min}({\bf A}_s^T {\bf A}_s)} < 1-\sqrt{\frac{s}{m}}+ o(1)-t \right\} & \leq  e^{-m t^2 /2} 
\end{align*}
where $t>0$ and $o(1)$ is a small term tending to zero as $m$ increases. 
In our notation, and neglecting $o(1)$, the previous bounds can be rewritten
\begin{align}
\Pr\left\{\lambda_{\max}(\mat{M}) > \left(\sqrt{m}+\sqrt{s}+t \sqrt{m} \right)^2\right\} & \leq  e^{-m t^2 /2} \label{eq:csconine1}\\
\Pr\left\{\lambda_{\min}(\mat{M}) < \left(\sqrt{m}-\sqrt{s}- t \sqrt{m} \right)^2\right\} & \leq  e^{-m t^2 /2} \label{eq:csconine2}
\end{align}
where 
$\mat{M}=m {\bf A}_s^T {\bf A}_s \sim {\bf \mathcal{W}}_{s}(m, {\bf I})$ and $\lambda_{\max}(\mat{M}) =m \lambda_{\max}({\bf A}_s^T {\bf A}_s)$.  
In Fig.~\ref{fig:concineq} and Fig.~\ref{fig:concineq2} these bounds are compared with the exact results given by Algorithm~\ref{alg:wis} and with the simple gamma approximations \eqref{eq:lmaxTWMC}, \eqref{eq:lminTWMC}, for some values on $s, m$. It can be noted that the concentration inequality bounds \eqref{eq:csconine1}, \eqref{eq:csconine2} are quite loose. 
For example, from Fig. 1 we observe that at $t=0.15$ the new results (the two lower curves) are many orders of magnitude lower than the concentration bound (solid line). 
\begin{figure}[h]
\psfrag{TW1}{W_1}
\centerline{\includegraphics[width=1\columnwidth,draft=false]{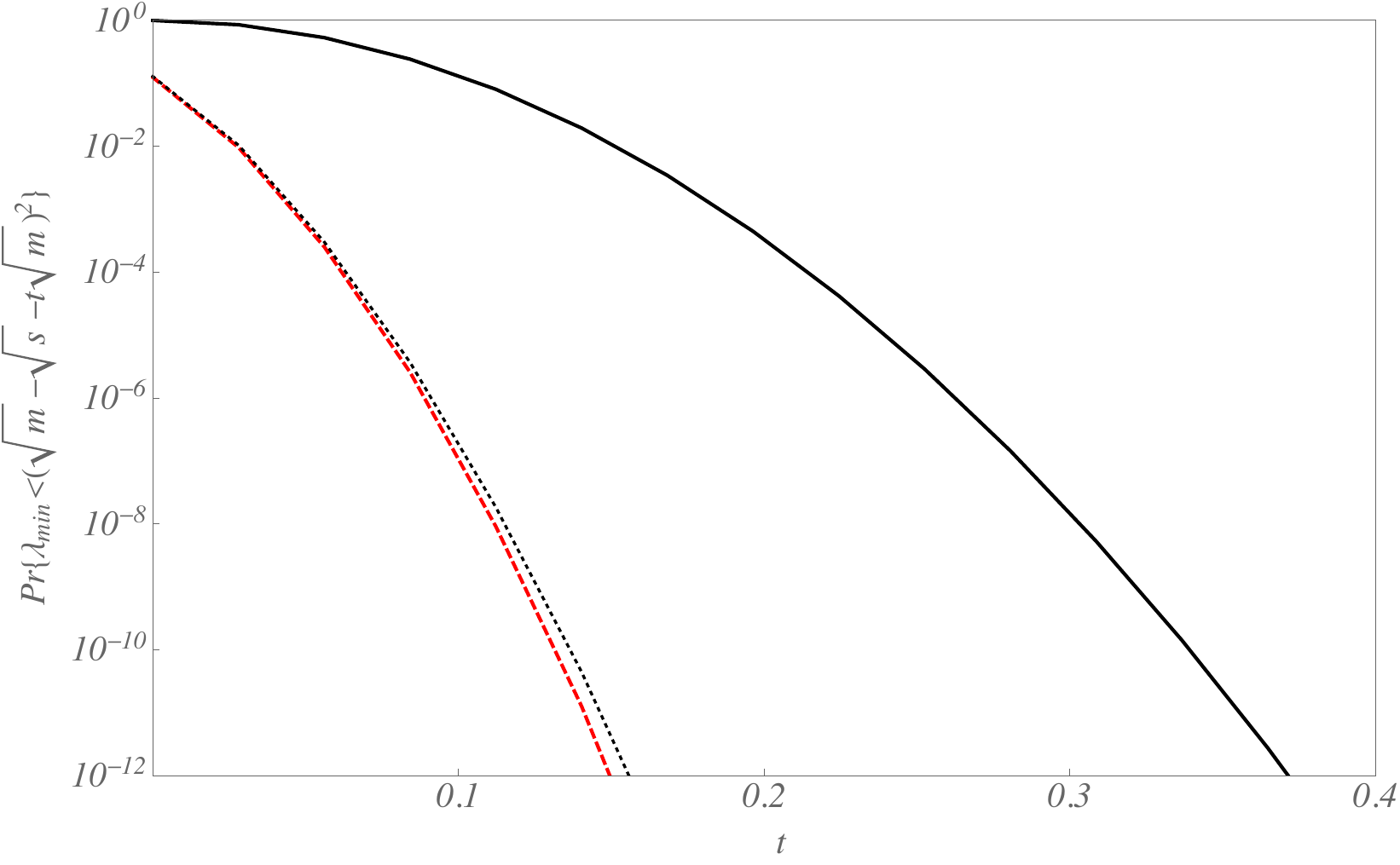}}
\caption{Distribution of the smallest eigenvalue for Wishart real matrices ${\mathcal{W}}_{s}(m, {\bf I})$, $m=400, s=10$. Comparison between the concentration inequality bound \eqref{eq:csconine2} (solid),  the gamma approximation \eqref{eq:lminTWMC} (dotted), and the exact (Alg. 1, dashed line).} \label{fig:concineq}
\end{figure}
\begin{figure}[h]
\psfrag{TW1}{W_1}
\centerline{\includegraphics[width=1\columnwidth,draft=false]{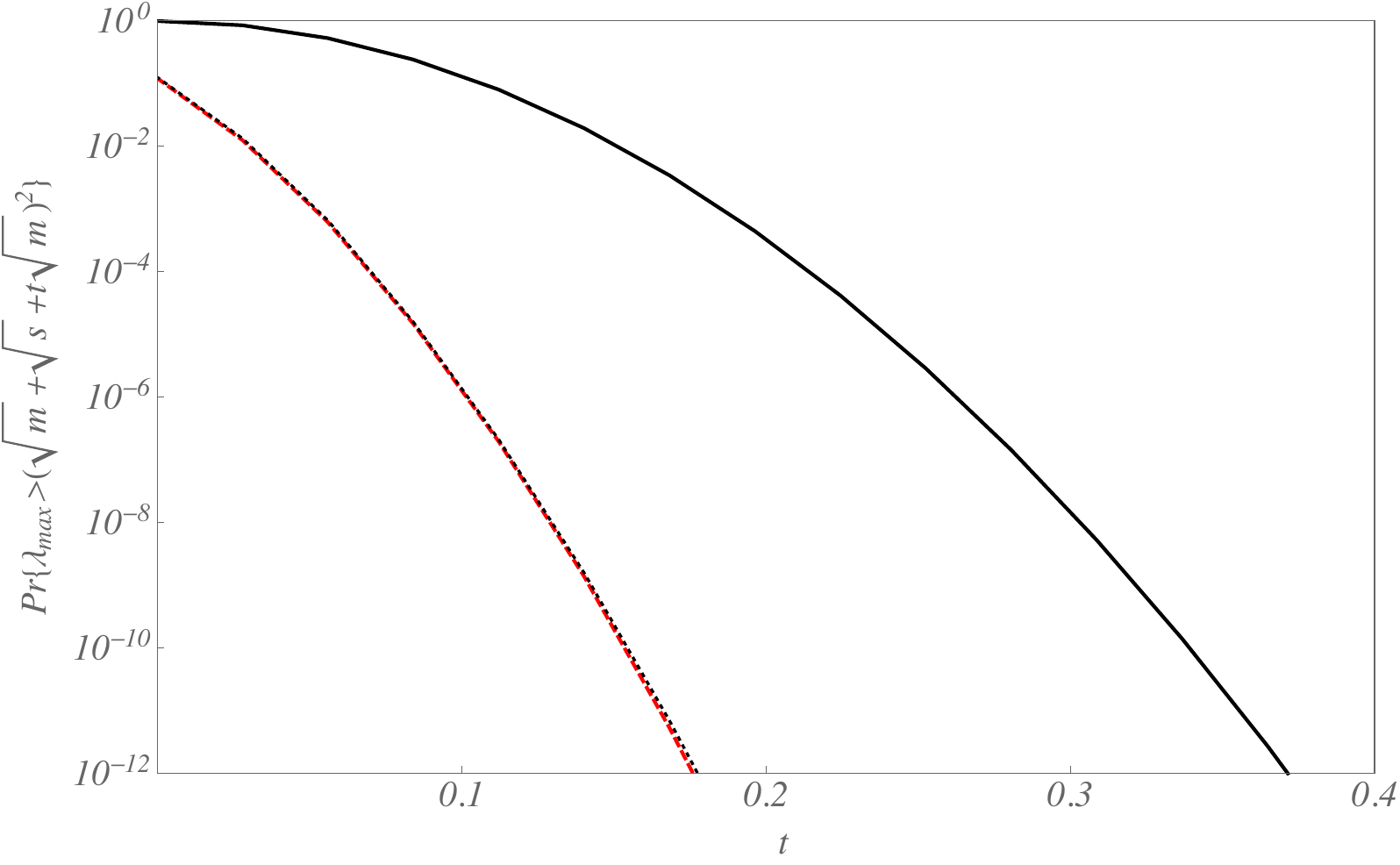}}
\caption{Distribution of the largest eigenvalue for Wishart real matrices ${\mathcal{W}}_{s}(m, {\bf I})$, $m=400, s=10$. Comparison between the concentration inequality bound \eqref{eq:csconine1} (solid),  the gamma approximation \eqref{eq:lmaxTWMC} (dotted), and the exact (Alg. 1, dashed line). The last two curves are very close.} \label{fig:concineq2}
\end{figure}


\section{Conclusions}
Iterative algorithms have been found to evaluate in few seconds the exact value of the probability that all eigenvalues lie within an arbitrary interval $[a,b]$, for quite large (e.g. $500 \times 500$) real white Wishart, complex Wishart with arbitrary correlation, double Wishart, and Gaussian symmetric/Hermitian matrices. 
These exact results for finite dimensions are therefore complementary to  methods for the analysis of asymptotically large matrices, like the approaches based on Coulomb gas models \cite{DeaMaj:06,MajViv:12}. 

%
Simple approximations based on shifted incomplete gamma functions have also been proposed, and it is proved that for increasingly large matrices the probability that all eigenvalues are within the limiting support is $0.6921$ for real white Wishart and GOE, and $0.9397$ for complex white Wishart and GUE.

For instance, we analyzed the probability that all eigenvalues are negative for GOE, of interest in complex ecosystems and physics. As another example,  in the context of compressed sensing we compared the new expressions with the concentration inequality based bounds.

\section*{Acknowledgements}
The author would like to thank the Reviewers for constructive comments, and A. Elzanaty, A. Giorgetti and A. Mariani for suggestions and discussions. 

\bibliographystyle{IEEEtran}
\bibliography{IEEEabrv,BiblioMCCV,RandomMatrix,MyBooks,MIMO,BibEIGEN}

\end{document}